\newcommand{\GCCnu}{$\operatorname{GCC}_\nu$}
\newcommand{\nul}{\operatorname{null}}
\def\ord#1{| #1 |} 
\newcommand{\Gc}{G^c}
\newcommand{\R}{\mathbb{R}}
\newcommand{\sRn}{\mathbb{S}_n(\R)}
\newcommand{\G}{\mathcal{G}}
\newcommand{\SG}{\mathcal{S}(G)}
\newcommand{\mr}{\operatorname{mr}}
\newcommand{\mrp}{\operatorname{mr}_+}
\newcommand{\M}{\operatorname{M}}
\newcommand{\Mp}{\operatorname{M}_+}
\newcommand{\mrn}{\operatorname{mr}_\nu}
\newcommand{\dmm}{\lceil \delta\rceil}
\newtheorem{thm}{Theorem}[section]
\newtheorem{prop}[thm]{Proposition}
\newtheorem{cor}[thm]{Corollary}
\theoremstyle{remark}
\newtheorem{remark}[thm]{Remark}
\newtheorem{conj}[thm]{Conjecture}
\newtheorem{ex}[thm]{Example}
\newtheorem{rem}[thm]{Remark}
\newcommand{\bit}{\begin{itemize}}
\newcommand{\eit}{\end{itemize}}
\newcommand{\ben}{\begin{enumerate}}
\newcommand{\een}{\end{enumerate}}
\newcommand{\beq}{\begin{equation}}
\newcommand{\eeq}{\end{equation}}
\newcommand{\bea}{\begin{eqnarray*}}
\newcommand{\eea}{\end{eqnarray*}}
\newcommand{\bpf}{\begin{proof}}
\newcommand{\epf}{\end{proof}\ms}
\newcommand{\ms}{\medskip}
\title{The Weak Version of the Graph Complement Conjecture and Partial Results for the Delta Conjecture}
\author{
Francesco Barioli\thanks{Department of Mathematics, University of Tennessee at Chattanooga, Chattanooga, TN 37403,  USA 
(francesco-barioli@utc.edu).} \and
Shaun M. Fallat \thanks{Department of Mathematics and Statistics,
University of Regina, Regina, SK, Canada 
(sfallat@uregina.ca).}
\and
Himanshu Gupta\thanks{Department of Mathematics and Statistics,
University of Regina, Regina, SK, Canada 
(Himanshu.Gupta@uregina.ca)}
\and
Zhongshan Li\thanks{Department of Mathematics and Statistics, 
Georgia State University, Atlanta, GA 30303, USA 
(zli@gsu.edu)}
}
\date{\today}
\begin{document}

\maketitle
\begin{abstract}
Since the transformative workshop by the American Institute of Mathematics on the minimum rank of a graph, two longstanding open problems have captivated the community interested in the minimum rank of graphs: the graph complement conjecture and the $\delta$-conjecture. In this paper, we use a classical result of Mader (1972) to establish a weak version of the graph complement conjecture for all key minimum rank parameters. In addition, again using the same result of Mader, we present some extremal resolutions of the $\delta$-conjecture. Furthermore, we incorporate the assumption of the $\delta$-conjecture and extensive work on graph degeneracy to improve the bound in the weak version of the graph complement conjecture. We conclude with a list of conjectured bounds on the positive semidefinite variant of the Colin de  Verdi\`ere number.
\end{abstract}

% edit keywords and MSC as needed
\noindent Keywords: minimum rank, maximum nullity, Colin de  Verdi\`ere numbers, strong Arnold property, Mader's Theorem, graph complement conjecture, $\delta$-conjecture, Maehara's conjecture, graph degeneracy, minors, girth.

\noindent AMS-MSC: 05C50; 15A03; 05C35. 

\section{Introduction}

Throughout this work all matrices discussed are real and symmetric and the set of $n\times n$ real symmetric matrices is denoted by $\sRn$.  For our purposes, a {\em graph} $G=(V,E)$ means a simple undirected graph (no loops, no multiple edges) with a vertex set $V$ and edge set $E$ (consisting of two-element subsets of $V$). The {\em order of $G$}, denoted by $\ord G$, is the cardinality of its vertex set $V$. The {\em size of $G$} is the cardinality of its edge set and is typically denoted by $m(G)$. The {\em complement of a graph $G=(V,E)$} is the graph $G^c$ on the same set of vertices $V$, and with edges $\{i,j\}$, $i\neq j$ whenever $\{i,j\} \not\in E$.

A common practice in combinatorial matrix theory is to associate a collection of matrices to a graph and study various algebraic invariants derived from this matrix class. This approach helps reveal interesting implications about the properties of the graph, or vice versa, uses the structure of a graph to derive algebraic consequences about the matrix class.
One such instance of this theoretical association is the so-called
{\em minimum rank problem for graphs}. In general, the minimum rank problem for a graph $G$ seeks to determine the smallest
possible rank over the collection of all real  symmetric matrices $A=[a_{ij}]$ with the `adjacency property' 
that for all $i \neq j$, $a_{ij} \neq 0$ if and only if $\{i,j\}$ is an edge in $G$. Conversely, 
for $A\in \sRn$,
the {\em graph} of $A$, denoted $\G(A)$, is the graph with vertices
$\{1,\dots,n \}$  and edges $\{ \{i,j \} : ~a_{ij} \ne 0,  1 \le i <j \le n \}$.  

Given a graph $G$, the {\em set of symmetric matrices described by  $G$} is defined as 
\begin{align*}
\SG=\{A\in\sRn : \G(A)=G\}.
\end{align*}
For this particular class of matrices we are interested in studying the {\em minimum rank of
$G$}, which is defined and denoted by
\begin{align*}
\mr(G)=\min \, \{ {\rm rank}\, A : A\in\SG \}.
\end{align*}

It is clear that, since the main diagonal of any matrix in $\SG$ is 
ignored when determining $\G(A)$, the minimum rank satisfies $0 \leq \mr(G) \leq n-1$. Moreover, if $G$ contains at least one edge, then $1 \leq \mr(G) \leq n-1$. A basic, yet instructive, example along these lines is the  path
on $n$ vertices, which is known to be the only graph (on $n$ vertices) with minimum rank equal to $n-1$ (see \cite{FH}).

The minimum rank of graphs has garnered sufficient attention in the literature, especially since the 2006 American Institute of Mathematics (AIM) workshop {\em``Spectra of Families of  Matrices described by Graphs, Digraphs, and Sign Patterns''}. Of the many contributions that resulted from this workshop, several meaningful  open problems have appeared over time, one of which has 
become known as the {\em``Graph Complement Conjecture"} or {\em GCC} for short. The GCC can be stated as the following inequality about
the minimum rank of a graph $G$ and its complement (see also \cite{AIMgroup} and for more information consult \cite{HLS}).

\begin{conj}  [GCC] \label{gcc}
For any graph $G$, 
\begin{align*}
\mr(G) + \mr(G^c) \leq \ord G+2.
\end{align*}
\end{conj}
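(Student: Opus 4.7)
The plan is to work with the equivalent maximum-nullity formulation of the conjecture. Since $\M(G) = \ord G - \mr(G)$, Conjecture \ref{gcc} is equivalent to
\[
\M(G) + \M(G^c) \geq \ord G - 2.
\]
One attacks this by combining density-based structural results with monotonicity properties of the maximum nullity, rather than pushing ranks directly.

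First I would exploit that $m(G) + m(G^c) = \binom{\ord G}{2}$, so at least one of the two graphs, which I denote $\Gb$, has average degree at least $(\ord G - 1)/2$. Applying Mader's theorem to $\Gb$ produces a $K_t$-minor (or topological minor) with $t$ of order $\sqrt{\ord G}$. Combining the minor-monotonicity of $\M_+$ with the identity $\M_+(K_t) = t-1$ gives
\[
\M(\Gb) \geq \M_+(\Gb) \geq t - 1 = \Omega(\sqrt{\ord G}),
\]
and hence $\mr(\Gb) \leq \ord G - \Omega(\sqrt{\ord G})$. Paired with the trivial $\mr \leq \ord G - 1$ for the other graph, this already yields the \emph{weak} form of the GCC that the paper announces.

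To push the bound to the conjectured additive constant $2$ I would try to control both graphs simultaneously. Denoting by $\Go$ the sparser graph, $\Go$ has small degeneracy, and under the $\delta$-conjecture one has $\M(\Go) \geq \delta(\Go) = \ord G - 1 - \Delta(G)$. A case split on whether the maximum degree $\Delta(G)$ is large or small would then try to exploit either the Mader bound on $\M(\Gb)$ or the $\delta$-type bound on $\M(\Go)$, with the delicate case being the intermediate regime where both contributions are individually only $\Theta(\sqrt{\ord G})$ and must be added coherently on the same vertex set.

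The main obstacle is the quantitative weakness of Mader: a graph on $n$ vertices with linear average degree is only guaranteed a $K_t$-minor with $t = O(\sqrt{n})$, and no purely combinatorial density argument does better in general. Consequently this approach cannot close the gap between $\mr(G) + \mr(G^c) \leq \ord G + O(\sqrt{\ord G})$ and the conjectured $\ord G + 2$. A proof of the sharp inequality appears to require genuinely linear-algebraic input --- for instance, exploiting the strong Arnold property or working within the Colin de Verdi\`ere framework, where minor-monotonicity and nullity behave jointly in a way that density alone cannot capture --- and it is exactly this step that I expect to be the real bottleneck, and the reason the GCC remains open.
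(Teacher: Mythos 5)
The statement you were asked about is Conjecture~\ref{gcc} itself: the paper does not prove it, states it as open, and only establishes weak versions of it, so there is no proof of record to compare yours against. You correctly recognize this and end by conceding that your approach cannot reach the additive constant $2$; that conclusion matches the paper's own position, and your general strategy (pass to maximum nullity, use $m(G)+m(\Gc)=\binom{\ord G}{2}$, invoke Mader, use minor monotonicity of a positive semidefinite nullity parameter) is the same toolkit the paper deploys for its Theorem~\ref{wggc-nu}.

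However, the quantitative core of your sketch would not deliver even the weak GCC in the form the paper proves it. First, Mader's theorem as used in the paper (Theorem~\ref{thm_Mader1972}) guarantees a $k$-\emph{connected subgraph} with $k$ linear in the average degree (already $d(G)\geq 4(k-1)$ suffices), not merely a $K_t$-minor or topological minor with $t=\Omega(\sqrt{\ord G})$; the square root in the paper's Theorem~\ref{thm_lower_bound_size_of_complement} is $\sqrt{m(\Gc)/2}$, so the resulting lower bound on $\nu(G)$ is linear in $\ord G$ whenever the complement is sparse. Second, and more importantly, applying a lower bound only to the denser graph $\Gb$ and pairing it with the trivial $\mr\leq \ord G-1$ for the other graph yields at best $\mr(G)+\mr(\Gc)\leq 2\ord G-\Omega(\sqrt{\ord G})$, which is \emph{not} of the form $b\cdot\ord G+2$ with a constant $b<2$; to obtain such a $b$, the paper applies Theorem~\ref{thm_lower_bound_size_of_complement} to \emph{both} $G$ and $\Gc$ and uses that $\sqrt{m(G)}+\sqrt{m(\Gc)}$ is maximized when the $\binom{\ord G}{2}$ edges are split evenly, which is what produces the constant $1+\tfrac{1}{\sqrt{2}}$. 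Third, the connectivity information is converted into a nullity bound via $\nu(G)\geq\lceil\kappa\rceil(G)$ (Theorem~\ref{thm_LSS_vertex_connectivity}), not via $\Mp(K_t)=t-1$ on a clique minor. Your closing diagnosis --- that density arguments alone cannot close the gap to $\ord G+2$, and that a proof of the sharp inequality needs genuinely linear-algebraic input --- is consistent with the paper, which leaves Conjecture~\ref{gcc} open.
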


We acknowledge here that the precise questions posed along these lines at the 2006 AIM workshop were as follows: How large can $\mr(G) + \mr(G^c)$
be? From this, two possibilities may arise:
\begin{enumerate}
    \item[\textbf{Problem 1}] Does there exist a constant $a \geq 2$ such that $\mr(G) + \mr(G^c) \leq \ord G+a$ for all graphs $G$? If so, what is the 
smallest such $a$?
    \item[\textbf{Problem 2}]  Does there exists a constant $b<2$ such that $\mr(G) + \mr(G^c) \leq b\cdot \ord G+2$ for all graphs $G$? If so, what is the smallest such $b$?
\end{enumerate}

The condition on $a$ in problem 1 can easily be seen to satisfy $a \geq 2$. Since the 2006 workshop, it has been known that the best possible value for $a$ is equal to $2$, giving Conjecture~\ref{gcc}. This can be easily seen by considering $G=P_n$ ($n \geq 4$). Since  $\mr(P_n)=n-1$ and $\mr(P_n^c)=3$ (see \cite{AIMgroup}).

 On the other hand, bearing in mind the value of $a=2$ in Problem 1, Problem 2 is very much related to what is now commonly referred to as the {\em Weak version of the Graph Complement Conjecture}. Furthermore, if Conjecture~\ref{gcc} is true in general, then the optimal value of $b$ from Problem 2 is one. Since $\mr(G) \leq \ord G-1$ for any graph, it follows that $b$ in Problem 2 can be chosen to be at most $2$. 

Since the original GCC was recorded, further interesting research has led to several stronger related conjectures, one of which concerns  the minimum rank of all 
positive semidefinite matrices in $\SG$. 
Let $\mrp(G)$ denote the minimum rank of all matrices $A$ in $\SG$ assuming $A$ is
positive semidefinite. Clearly, $\mr(G)\le\mrp(G)$ for any graph $G$. However, a stronger variant of the graph complement conjecture has been proposed
involving $\mrp(G)$. That is,

\begin{conj}   [GCC$_+$] \label{gcc+}
For any graph $G$, 
\begin{align*}
   \mrp(G) + \mrp(G^c) \leq \ord G+2. 
\end{align*}
\end{conj}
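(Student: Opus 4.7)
The plan is to work in the equivalent nullity form: since $\mrp(H) = \ord{H} - \Mp(H)$ for any graph $H$, Conjecture~\ref{gcc+} is equivalent to the inequality $\Mp(G) + \Mp(G^c) \geq \ord{G} - 2$. This reformulation is the natural one from the Mader-minor viewpoint, because both $\Mp$ and its Colin de Verdi\`ere-type relative $\nu$ are bounded below in the presence of dense minor structure, whereas the upper bound on $\mrp$ that the complement conjecture asks for is much harder to access directly.

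The first step is a density dichotomy. Since $m(G) + m(G^c) = \binom{\ord{G}}{2}$, at least one of $G$ and $G^c$ --- say $G$, after swapping --- has average degree at least $(\ord{G}-1)/2$. Mader's theorem then furnishes a clique minor $K_t \preceq G$ with $t$ growing like $\sqrt{\ord{G}}$, and passing through the minor-monotone parameter $\nu$ (which satisfies $\nu(H) \leq \Mp(H)$) yields $\Mp(G) \geq t-1$. This single inequality already reproduces the weak graph-complement bound that forms the main content of the present paper, but it is very far from the linear deficit of $2$ demanded by the conjecture.

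The real work is to couple $G$ and $G^c$ rather than bounding them separately. Given an extremal matrix $A \in \SGp$ with $\operatorname{rank}(A) = \mrp(G)$, whose kernel $\ker A$ has dimension $\Mp(G)$, I would try to construct $B \in \mathcal{S}_+(G^c)$ whose range lies largely inside $\ker A$, so that $\operatorname{rank}(A) + \operatorname{rank}(B)$ does not overspend the ambient $\ord{G}$ dimensions. The Strong Arnold Property, together with a transversality argument on the manifold of psd matrices of fixed rank, is the natural tool here, in the spirit of the complementarity-type proofs known for other Colin de Verdi\`ere numbers; the desired ``$+2$'' slack should come from the two signs of freedom that the Strong Arnold Property always provides.

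The hardest step, and the reason I expect this plan to stall, is closing the gap between the $\sqrt{\ord{G}}$-scale contribution delivered by Mader combined with minor monotonicity and the tight linear deficiency required. A full proof would likely interleave the minor argument with a local combinatorial peeling --- removing minimum-degree vertices in $G$ versus $G^c$ and tracking the induced change in $\Mp$ --- until the remaining core is dense enough that the transversality construction can absorb the $\ord{G}-2$ dimensions at once. Making this two-sided bookkeeping balance exactly, rather than losing a lower-order term at every reduction, is the step I expect to be the true obstacle.
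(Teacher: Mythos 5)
You have been asked to prove Conjecture~\ref{gcc+}, which is an \emph{open conjecture}: the paper offers no proof of it, and only establishes the weak version, Conjecture~\ref{gccnu}, with a multiplicative constant $1+\tfrac{1}{\sqrt 2}<1.708$ in place of $1$ (Theorem~\ref{wggc-nu}). Your proposal does not close that gap, and its first half contains a quantitative error. The density dichotomy plus Mader's Theorem~\ref{thm_Mader1972} is indeed the paper's route, but Mader furnishes a highly \emph{connected subgraph}, not directly a clique minor, and on a graph with $m\ge n(n-1)/4$ edges the connectivity obtained is \emph{linear} in $n$ (roughly $(\tfrac12-\tfrac{1}{2\sqrt2})n\approx 0.146\,n$, cf.\ Theorem~\ref{thm_lower_bound_size_of_complement}), not of order $\sqrt{\ord G}$ as you write; a $\sqrt{\ord G}$-scale bound would not even reproduce the weak conjecture. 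More importantly, the remark following Theorem~\ref{wggc-nu} exhibits Matula's self-complementary graphs on $4s$ vertices in which every subgraph of $G$ and every subgraph of $G^c$ has vertex connectivity at most $s=\ord G/4$, so \emph{any} strategy that lower-bounds $\Mp(G)+\Mp(G^c)$ through the connectivity of subgraphs (or through $\lceil\kappa\rceil$ and Theorem~\ref{thm_LSS_vertex_connectivity}) is provably stuck at $\ord G/2$, i.e.\ at the constant $3/2$ for the $\mrp$ sum. Your hope of ``interleaving the minor argument with a local peeling'' to reach deficit $2$ runs directly into this obstruction.

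The second half of your plan --- constructing $B\in\mathcal{S}_+(G^c)$ whose range lies essentially in $\ker A$ for an optimal $A\in\SGp$, using SAP and transversality to account for the ``$+2$'' --- is the step that would have to carry the entire conjecture, and it is only announced, not performed. You give no construction of $B$, no argument that the ranges can be made complementary up to a two-dimensional overlap, and no mechanism by which the Strong Arnold Property supplies ``two signs of freedom''; note also that SAP is not even available for a generic optimal $A\in\SGp$, which is why the paper works with $\nu$ rather than $\Mp$ when it invokes SAP. As you yourself anticipate, this is exactly where the plan stalls. The honest assessment is that your proposal, if its first half is corrected, proves at most what the paper already proves (Conjecture~\ref{gccnu} with $b_+<2$), and leaves Conjecture~\ref{gcc+} untouched; a correct completion of your second step would be a major new theorem, not a routine verification.
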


It is clear that GCC$_+$ is a stronger inequality than GCC and that the
bound $\ord G+2$ is best possible, considering paths as noted previously. The {\em maximum nullity} of a graph $G$ is defined as
\begin{align*}
\M(G)=\max\, \{ \nul(A): A\in \SG \},    
\end{align*}
and the {\em maximum positive semidefinite nullity of $G$} is defined as
\begin{align*}
  \Mp(G)=\max\, \{\nul (A): A\in \SG,\ A \mbox{ is positive semidefinite}\}.  
\end{align*}
Bearing in mind the elementary rank-nullity theorem, Conjectures \ref{gcc} and \ref{gcc+} are equivalent to 
\begin{align*} 
\M(G) + \M(\Gc)&\geq \ord G-2,\\ 
\M_{+}(G) + \M_{+}(\Gc)&\geq \ord G-2.
\end{align*}

Similar suspected inequalities involving various Colin de Verdi\`ere numbers  have appeared in \cite{KLV}. We first outline the Strong Arnold property for a given matrix.  A real symmetric matrix $A$ satisfies the {\em Strong Arnold property (SAP)} provided there does {not} exist any nonzero real symmetric matrix
$X$ satisfying
$AX = 0$,
 $A\circ X = 0$, and
 $I\circ X=0$,
where $\circ$ denotes the Hadamard (entry-wise) product and $I$ is the identity matrix.
The Strong Arnold property has long been known to be equivalent to requiring that certain manifolds intersect transversally (see, for example, \cite{HLS}). 
%The parameter $\mu(G)$ (\cite{CdVmu}) is the maximum nullity
%among all symmetric matrices $A=[a_{ij}]\in\SG$ that satisfy:
%\bit
%\item $A$ satisfies the strong Arnold property.
%\item For all $i\ne j$,
%$a_{ij}\le 0.$ 
%\item $A$ has exactly one negative eigenvalue %(counting multiplicity).
%\eit
Restricting to positive semidefinite matrices, in  \cite{CdVnu} Colin de Verdi\`ere 
introduced the parameter $\nu(G)$, defined to be the maximum nullity among all positive semidefinite matrices $A\in\SG$ that satisfy the strong Arnold property. 
Clearly, 
for any $G$, $\nu(G)\le \M_+(G) \leq \M(G)$. 
So, it is natural to form a graph complement-type conjecture for $\nu$ as follows.
\begin{conj}   [\GCCnu ]
For any graph $G$, 
\begin{align*} 
\nu(G) + \nu(G^c) \geq \ord G-2.
\end{align*}
\end{conj}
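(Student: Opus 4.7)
The plan is to attack \GCCnu by lifting the Mader-theorem approach that the paper employs for $\M$ and $\Mp$ to the parameter $\nu$, and then attempting to sharpen the resulting weak bound up to the extremal value $|G|-2$.

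The starting observation is that $\nu$ is known to be minor-monotone and that $\nu(K_p) = p-1$, so $\nu(G) \geq p-1$ whenever $K_p$ is a minor of $G$. Given a graph $G$ on $n$ vertices, at least one of $G$ and $G^c$ has at least $\binom{n}{2}/2$ edges, hence average degree at least $(n-1)/2$. Mader's theorem then guarantees a $K_p$-minor in whichever of $G, G^c$ is dense, for $p$ of order $n/\sqrt{\log n}$. Applied to the dense side, the inequality above delivers the weak-version statement
\[
\nu(G) + \nu(G^c) \;\geq\; \frac{c\, n}{\sqrt{\log n}} - 1
\]
for some absolute constant $c>0$, a $\nu$-analogue of what the earlier portion of the paper establishes for $\M$ and $\Mp$.

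To push from this weak bound to the sharp bound $n-2$, I would proceed by induction on $n$, removing extremal vertices. If $G$ has a vertex $v$ of small degree $k$, then $v$ has degree at least $n-1-k$ in $G^c$, and one can try to apply the degeneracy and $\delta$-conjecture style arguments developed elsewhere in the paper to control $\nu(G^c)$ directly, while $G-v$ satisfies the inductive hypothesis on fewer vertices. If instead both $G$ and $G^c$ have minimum degree linear in $n$, then both graphs are sufficiently dense that one can hope to extract linear-sized clique minors via sharper structural theorems beyond a raw application of Mader. Throughout, the strong Arnold property must be preserved under the operations used to produce the witness matrices; this requires generic-perturbation arguments ensuring that the PSD matrices achieving maximum nullity can be chosen transversally on both $G$ and $G^c$ simultaneously.

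The principal obstacle is the $\sqrt{\log n}$ gap inherent in Mader's theorem: even its sharpest known quantitative forms fall short of upgrading the weak bound to a linear bound, so some genuinely new ingredient is needed, either a structural theorem that simultaneously exploits the complementarity of $G$ and $G^c$, or a direct matrix-theoretic construction producing a PSD pair $A\in\SG$ and $B\in\mathcal{S}(G^c)$ with $\nul(A)+\nul(B)\geq n-2$ and SAP on both sides, bypassing minor extraction altogether. A secondary hurdle is that $\nu$ is more rigid than $\Mp$ because of the SAP constraint; thus, even once a lower bound for $\Mp(G)+\Mp(G^c)$ is secured, transferring it to $\nu(G)+\nu(G^c)$ requires an additional transversality argument. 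Together these obstacles explain why \GCCnu, although the natural common strengthening of GCC and GCC$_+$, remains out of reach by the present toolkit.
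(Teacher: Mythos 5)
The statement you are addressing is stated in the paper as a \emph{conjecture} (\GCCnu), not a theorem: the paper offers no proof of it, and your proposal --- quite correctly --- does not supply one either, since you yourself conclude that ``some genuinely new ingredient is needed.'' So the honest verdict is that there is nothing here that proves the statement, and there is no proof in the paper to compare against. What the paper actually establishes is only the weak version: using Mader's theorem on highly \emph{connected subgraphs} (not clique minors) together with the Lov\'asz--Saks--Schrijver/van der Holst inequality $\nu(G)\geq \lceil\kappa\rceil(G)$, it proves $\nu(G)+\nu(G^c) > \bigl(1-\tfrac{1}{\sqrt{2}}\bigr)\ord{G} - 1$, equivalently $\mrn(G)+\mrn(G^c) < \bigl(1+\tfrac{1}{\sqrt 2}\bigr)\ord{G}+1$.

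Beyond the fact that the proposal is a research plan rather than a proof, two concrete points in it would fail or underperform. First, your ``weak version'' route through clique minors is strictly worse than the paper's: extracting a $K_p$-minor from average degree $d$ costs a factor $\sqrt{\log d}$ (this is Kostochka/Thomason, not Mader's 1972 theorem, which concerns $k$-connected subgraphs), so you only get $\nu(G)+\nu(G^c)\gtrsim n/\sqrt{\log n}$, whereas the connectivity route gives a genuinely linear lower bound with no logarithmic loss. Second, the suggestion that when both $G$ and $G^c$ are dense one can ``extract linear-sized clique minors'' (or highly connected subgraphs) on both sides runs into the Matula construction recorded in the paper's remark: there are self-complementary graphs on $4s$ vertices in which the maximum connectivity of any subgraph of $G$ or of $G^c$ is only $s=\ord{G}/4$, so no argument that proceeds purely by finding highly connected substructures (or large clique minors) in both $G$ and $G^c$ can reach the bound $\ord{G}-2$, nor even $\tfrac{3}{2}\ord{G}$ on the minimum-rank side. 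The inductive vertex-removal step and the ``generic perturbation'' claim preserving the Strong Arnold property are also unsubstantiated; the latter is exactly the kind of transversality argument that is known to be delicate and is nowhere carried out.
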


Another related parameter, $\xi(G)$ has also been defined as the maximum nullity over all matrices in $S(G)$ that satisfy $SAP$ (see \cite{BFH-xi}). Evidently, it follows that $\nu(G) \leq \xi(G) \leq M(G)$.  Certainly, in general, \GCCnu\ is stronger than GCC$_{+}$ (and stronger than the corresponding GCC$_{\xi}$, which we do not define here but can be found in \cite{HLS}), that is also stronger than GCC. Note that the GCC, GCC$_+$, and GCC$_{\nu}$ all fall into the family of so-called Nordhaus-Gaddum type problems
(see \cite{AAC}) as they involve bounding the sum of a graph parameter at a graph $G$ and at its complement. Nordhaus–Gaddum type problems have appeared in numerous works and involve a variety of graph parameters, including the chromatic number, independence number, domination number, and others such as the Hadwiger number (see \cite{Kost}). For inequalities specific to certain nullity and rank parameters, see \cite{ng_paper, hogben}.

A core and important property of many  Colin de Verdi\`ere-type parameters is minor monotonicity.  The {\em contraction} of an edge $e=\{u,v\}$ of  $G$ is obtained by identifying the vertices $u$ and $v$, removing a loop   that may arise in this process, and replacing any multiple edges by a single edge.
 A {\em minor}
of $G$ arises by performing a sequence of edge deletions, deletions of isolated
vertices, and/or contractions of edges.  
 A graph parameter $\beta$ is {\em minor monotone} if for any minor $H$  of $G$, $\beta(H) \le \beta(G)$ and
 $\beta(G)=\beta(H)$ if $G$ is isomorphic to $H$.  In \cite{CdVmu} and \cite{CdVnu} it is shown that  $\nu$ is a minor monotone parameter. If $\beta$ is a graph parameter, not necessarily minor monotone, we define the minor-monotone ceiling 
 of $\beta$ as follows:
 \begin{align*}
    \lceil \beta\rceil(G)  &:= \max\, \{ \, \beta(H): H~ \mbox{is a minor of}~ G\}.   
 \end{align*}

To further simplify the notation, we set $\mrn(G)=\ord G-\nu(G)$ for any graph $G$. In this work, we are primarily interested in the so-called {\em Weak Graph Complement Conjectures}, which we formally state now for all relevant parameters.

\begin{conj} [Weak Graph Complement Conjectures] \label{gccnu}
There exist universal constants $b,b_+,b_{\nu} < 2$ such that for every graph $G$, the following inequalities hold:
\begin{enumerate}[(i)]
    \item  $\mr(G) + \mr(G^c) \leq b\cdot\ord G +2$,
    \item  $\mrp(G) + \mrp(G^c) \leq b_+\cdot \ord G+2$, 
    \item $\mrn(G) + \mrn(G^c) \leq b_{\nu}\cdot \ord G+2.$
\end{enumerate}
\end{conj}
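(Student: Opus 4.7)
The plan is to prove (iii) first and then deduce (i) and (ii) from the chain $\nu(G)\le\Mp(G)\le\M(G)$, which via rank--nullity is equivalent to $\mr(G)\le\mrp(G)\le\mrn(G)$. Consequently, any universal constant $b_\nu<2$ established in (iii) automatically furnishes $b\le b_\nu$ and $b_+\le b_\nu$ in (i)--(ii), so the three claims collapse to a single linear lower bound on $\nu(G)+\nu(\Gc)$.

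Rewriting (iii) via rank--nullity turns it into the requirement
\[
\nu(G)+\nu(\Gc)\ \ge\ (2-b_\nu)\,n-2.
\]
From the edge-count identity $m(G)+m(\Gc)=\binom{n}{2}$, one of the two graphs---say $G$---has at least $\binom{n}{2}/2$ edges, and so average degree at least $(n-1)/2$. I would then invoke Mader's 1972 theorem---\emph{every graph of average degree at least $4k$ contains a $k$-connected subgraph}---with $k=\lfloor(n-1)/8\rfloor$, producing a $\Theta(n)$-connected subgraph $H\subseteq G$. Because $H$ arises from $G$ by vertex and edge deletions alone, $H$ is a minor of $G$, and the minor monotonicity of $\nu$ (recorded earlier in the introduction) gives $\nu(G)\ge\nu(H)$.

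To close, I would certify that $\nu(H)$ grows linearly with $n$ by extracting from the $k$-connected host $H$ a $K_r$-minor of order $r=\Theta(k)=\Theta(n)$. Since $\nu(K_r)=r-1$, this yields $\nu(G)\ge\nu(H)\ge r-1=\Omega(n)$, hence $\nu(G)+\nu(\Gc)=\Omega(n)$, and an explicit universal $b_\nu<2$ drops out; the bounds $b,b_+<2$ then follow by the hierarchy above. Finitely many small-order graphs, for which the savings are dwarfed by the trivial bound, are absorbed into the additive $+2$.

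The main obstacle is precisely the second Mader-style step: converting $\Theta(n)$-connectivity of $H$ into a complete minor of size proportional to that connectivity is considerably more delicate than producing the $k$-connected subgraph itself, and the quantitative cost of this extraction directly controls how far below $2$ the universal constants $b,b_+,b_\nu$ can actually be pushed. The paper's mention of ``graph degeneracy'' and the $\delta$-conjecture in the abstract suggests these tools feed into a refined form of this step (and the subsequent improvements), while Mader's 1972 result serves as the structural backbone for the initial reduction from edge density to usable connectivity.
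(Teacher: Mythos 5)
Your overall architecture (prove (iii) first, deduce (i) and (ii) from $\nu \le \Mp \le \M$, reduce everything to a linear lower bound on $\nu(G)+\nu(\Gc)$, and use Mader's theorem to extract a $\Theta(n)$-connected subgraph from whichever of $G$, $\Gc$ carries at least half of the $\binom{n}{2}$ edges) matches the paper up to the point you yourself flag as the obstacle --- and that obstacle is fatal as you have set the argument up. A $k$-connected graph need \emph{not} contain a $K_r$-minor with $r=\Theta(k)$: $k$-connectivity only forces average degree at least $k$, so by the Kostochka--Thomason bound the guaranteed complete minor has order only $\Theta\bigl(k/\sqrt{\log k}\bigr)$, and this is tight (the random graph $G(n,1/2)$ is $\Theta(n)$-connected yet has Hadwiger number $\Theta\bigl(n/\sqrt{\log n}\bigr)$). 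Feeding this into $\nu(K_r)=r-1$ therefore yields only $\nu(G)+\nu(\Gc)=\Omega\bigl(n/\sqrt{\log n}\bigr)$, i.e.\ $\mrn(G)+\mrn(\Gc)\le 2n-\Omega\bigl(n/\sqrt{\log n}\bigr)$, and a sublinear saving does not produce any universal constant $b_\nu<2$. So the step you describe as ``considerably more delicate'' is in fact impossible in the strength you need.

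The missing ingredient is Theorem~\ref{thm_LSS_vertex_connectivity} (Lov\'asz--Saks--Schrijver together with van der Holst): $\nu(H)\ge\kappa(H)$ for every graph $H$, hence $\nu(G)\ge\lceil\kappa\rceil(G)$ by minor monotonicity. This converts the $k$-connected subgraph delivered by Mader \emph{directly} into $\nu(G)\ge k$, with no detour through complete minors, and it is exactly how the paper closes the argument in Theorem~\ref{thm_lower_bound_size_of_complement}. A secondary difference: rather than working only with the denser of $G$ and $\Gc$, the paper applies the resulting bound $\nu(G)>\frac{n-1}{2}-\sqrt{m(\Gc)/2}$ to both graphs simultaneously and optimizes over $m(G)+m(\Gc)=\binom{n}{2}$, which is what produces the explicit constant $1+\frac{1}{\sqrt{2}}<1.708$ in Theorem~\ref{wggc-nu}; your one-sided version would still give \emph{some} $b_\nu<2$ once the connectivity-to-$\nu$ step is repaired, but a weaker one.
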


The version stated above of the Weak Graph Complement Conjectures represents a slight modification from what has generally been referred to as the ``weak" version of the Graph Complement Conjecture in the community (see Question 2 in \cite{gcc1}). In particular, we have added a `+2' in part to align with the presumed constant from the Graph Complement Conjecture. However, a by-product of the proofs below that provide a positive resolution to Conjecture \ref{gccnu} also proves that there exists a constant $b_\nu<2$ such that $\mrn(G) + \mrn(G^c) \leq b_{\nu}\cdot \ord G$ for all graphs $G$ (see also  Question 2 in \cite{gcc1}).

If Conjecture \ref{gccnu} holds, a natural subsequent problem is to determine the smallest possible values of such constants. By definitions, in Conjecture \ref{gccnu} we have $(iii)\implies (ii) \implies (i)$. In Section \ref{sec_Mader_Consequences}, we note a result of Mader and use it to resolve Conjecture \ref{gccnu} with constants of at most $1.708$ (see Theorem \ref{wggc-nu}). 

As previously mentioned, the 2006 AIM workshop was, in many ways, a springboard for numerous investigations into the minimum rank and the maximum nullity of graphs. Another significant problem posed at this workshop has since become known as the   {\em``$\delta$-conjecture"}. In what follows, we provide further details on the $\delta$-conjecture as well as its strong version. 

The {\em degree} of a vertex $v$ in $G$, denoted by $\deg_G(v)$, is defined as the number of neighbors of $v$ in $G$, while the {\em minimum degree} $\delta(G)$ represents the smallest degree among all vertices of $G$. It is easily seen that the parameter $\delta$ is not minor monotone. This fact suggests the introduction of two additional parameters, namely, the {\em degeneracy} of $G$, defined as
\begin{align*}
    l(G) = \max\, \{ \delta(H) : H \mbox{ is a subgraph of } G \},
\end{align*}
and the {\em minor monotone ceiling} of $\delta(G)$, is given by 
\begin{align*}
    \dmm(G) = \max\, \{\delta(H) : H \mbox{ is a minor of } G \}.
\end{align*}

The degeneracy of a graph is a classical topic in graph theory and can be seen as a measure of the sparsity of a graph. In \cite{LickWhite}, a graph was said to be \emph{$k$-degenerate} if for every induced subgraph $H$ of $G$, $\delta(H)\leq k$. By definition, $l(G)$ is monotone under edge and vertex deletions, while $\dmm(G)$ is minor monotone. Since a subgraph can always be regarded as a minor, we have $\delta(G) \leq l(G) \leq \dmm(G)$. 

The $\delta$-conjecture asks whether $M(G)\geq \delta(G)$, or equivalently $\mr(G) \leq \ord G - \delta(G)$ holds for all graphs $G$. After the original $\delta$-conjecture was conceived and observed, additional analysis has led to stronger conjectures about the minimum rank restricting to certain subclasses of positive semidefinite matrices in $\SG$. The following strong version of this conjecture was proposed in  \cite{barioli2013parameters}.
\begin{conj} [$\delta$-Conjecture for $\nu$] \label{delta}
For any graph $G$, 
\begin{equation} \label{deltaEq}
    \delta(G) \leq l(G) \leq \dmm(G) \leq \nu(G).
\end{equation}
\end{conj}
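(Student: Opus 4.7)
The first two inequalities in \eqref{deltaEq} are immediate from the definitions. For $\delta(G)\le l(G)$, observe that $G$ is itself a subgraph of $G$, so $\delta(G)$ belongs to the set whose maximum defines $l(G)$. For $l(G)\le\dmm(G)$, every subgraph of $G$ is also a minor of $G$, so the latter maximum is taken over a larger family. The substantive content of the conjecture is therefore the third inequality, $\dmm(G)\le\nu(G)$.

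My plan is to first reduce this minor-monotone-ceiling form to the pointwise form. Since $\nu$ is minor monotone \cite{CdVnu}, if $H$ is a minor of $G$ realizing $\delta(H)=\dmm(G)$ and if one has the pointwise inequality $\delta(H)\le\nu(H)$, then
\[
\dmm(G)\;=\;\delta(H)\;\le\;\nu(H)\;\le\;\nu(G).
\]
Hence it suffices to establish the plain $\delta$-conjecture for $\nu$, namely $\delta(G)\le\nu(G)$ for every graph $G$.

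For this pointwise inequality I would attempt a constructive approach: build a positive semidefinite matrix $A\in\SG$ satisfying the Strong Arnold Property with nullity at least $\delta(G)$. A natural first attempt is induction on $\ord G$, peeling off a vertex $v$ of minimum degree and extending a suitable matrix for $G-v$ by a carefully chosen row and column. The main obstacle is preserving SAP under this extension: SAP couples the kernel of $A$ with both its off-diagonal zero-nonzero pattern and the diagonal mask, and it does not behave well under local surgery, which is precisely why even the non-positive-semidefinite $\delta$-conjecture has resisted a direct inductive proof.

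Anticipating that a full resolution is out of reach, a realistic partial strategy---consonant with the paper's title and its explicit use of Mader's theorem---is to exploit the minor-monotone ceiling directly, rather than detouring through the pointwise reduction. Mader's density bound forces a graph with sufficiently large average (hence minimum) degree to contain a complete-graph minor $K_t$ of controlled size, and one then invokes the known value $\nu(K_t)=t-1$ together with minor monotonicity to bound $\nu(G)$ from below. The hard part here is quantitative: Mader's theorem produces $t$ only on the order of $\sqrt{\delta(G)/\log\delta(G)}$, whereas matching $\delta(G)$ requires $t\approx\delta(G)+1$, so bridging that gap---presumably by combining Mader-type density arguments with finer structural information about degenerate graphs or extremal low-degree vertices---is where the real work lies.
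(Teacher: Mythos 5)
The statement you are asked to prove is an open conjecture; the paper itself offers no proof of it, only partial results under extra hypotheses (Theorem~\ref{thm_partial_proof_of_delta}), and your proposal likewise concedes that ``a full resolution is out of reach.'' So there is no complete argument to assess: the genuine gap is simply that the third inequality $\dmm(G)\le\nu(G)$ is never established. Your preliminary reductions are correct and match the paper's own remark following the conjecture --- the first two inequalities are definitional, and minor monotonicity of $\nu$ reduces everything to the pointwise statement $\delta(G)\le\nu(G)$. Your candid assessment of why a na\"ive inductive construction fails (SAP does not survive local surgery) is also reasonable.

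Where your proposed partial strategy goes astray is in the choice of instrument. You aim for a complete-graph minor $K_t$ and invoke $\nu(K_t)=t-1$, correctly noting that density arguments then only yield $t=\Theta(\sqrt{\delta/\log\delta})$ --- but that is the Kostochka--Thomason bound for clique minors, not the Mader theorem the paper actually uses. Mader's 1972 theorem (Theorem~\ref{thm_Mader1972}) produces a \emph{$k$-connected subgraph} whenever the average degree is at least $4(k-1)$, and the Lov\'asz--Saks--Schrijver/van der Holst bound $\nu(G)\ge\lceil\kappa\rceil(G)$ (Theorem~\ref{thm_LSS_vertex_connectivity}) converts connectivity directly into a lower bound on $\nu$. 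This route loses only a multiplicative factor of $4$ (Theorem~\ref{thm_nu_is_one_fourth_of_delta}: $\nu(G)>\lceil\delta\rceil(G)/4$), rather than the $\sqrt{\log}$-type loss inherent in the clique-minor route, and the remaining factor of $4$ is then recovered under girth or forbidden-subgraph hypotheses via K\"uhn--Osthus-type amplification of the minimum degree in a minor. If you pursue partial results, you should replace the clique-minor detour with the connectivity bound; as written, your strategy cannot reach even the factor-of-four approximation the paper obtains, let alone the conjectured inequality.
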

Note that it is enough to prove $\nu(G) \geq \delta(G)$ for Conjecture \ref{delta} since  $\nu(G)$ is a minor monotone parameter. A classical result in graph theory is the relationship $\delta(G) \geq \kappa(G)$ for any graph $G$, where $\kappa(G)$ is the {\em vertex connectivity} of $G$. The minor monotone ceiling of $\kappa(G)$ is denoted by $\lceil \kappa \rceil (G)$, and therefore, $\lceil \delta \rceil (G)\geq \lceil \kappa \rceil (G)$ for all graphs $G$. The following theorem relates $\nu(G)$ to $\lceil \kappa \rceil (G)$, based on results of Lov\'{a}sz, Saks, Schrijver \cite{LSS1989}, and Holst \cite{vdH08}. Specifically, in \cite{LSS1989}, it was shown that $\Mp(G) \geq \kappa(G)$ for any graph $G$, by deriving a faithful orthogonal representation for a graph $G$ in a generic manner. Building on this, Holst \cite{vdH08} demonstrated that there exists a positive semidefinite matrix satisfying the strong Arnold property, thereby establishing the inequality $\nu(G) \geq \kappa(G)$ for any graph $G$ (see \cite[Theorem 4]{vdH08}). 
Since $\nu$ is a minor monotone parameter, the following result then follows directly.
\begin{thm}\label{thm_LSS_vertex_connectivity}
    Let $G$ be a given graph. Then $\nu(G) \geq \lceil \kappa \rceil (G)$.
\end{thm}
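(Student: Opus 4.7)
The plan is to combine two ingredients that are already on the table in the excerpt: the inequality $\nu(G) \geq \kappa(G)$ attributed to Lov\'{a}sz--Saks--Schrijver and Holst, and the minor monotonicity of $\nu$ established in \cite{CdVnu}. The statement to be proved is a straightforward ``upgrade'' of the former inequality from $\kappa$ to its minor monotone ceiling $\lceil \kappa \rceil$, obtained by exploiting the fact that the left-hand side is already minor monotone.

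First, I would unpack the definition of the minor monotone ceiling: by definition, $\lceil \kappa \rceil (G) = \max \{ \kappa(H) : H \text{ is a minor of } G \}$. Since this is a maximum over a finite set, I would fix a particular minor $H$ of $G$ achieving the maximum, so that $\kappa(H) = \lceil \kappa \rceil (G)$.

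Next, I would apply the Holst inequality $\nu(H) \geq \kappa(H)$ (that is, $\nu(H) \geq \kappa(H)$ for every graph $H$) to this specific $H$, obtaining $\nu(H) \geq \lceil \kappa \rceil (G)$. Finally, since $\nu$ is minor monotone and $H$ is a minor of $G$, we have $\nu(G) \geq \nu(H)$, and the desired inequality $\nu(G) \geq \lceil \kappa \rceil (G)$ follows by chaining the two estimates.

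There is essentially no obstacle here: both ingredients are cited in the paragraph immediately preceding the theorem, and the argument is the standard ``take a witnessing minor and push up'' pattern used to compare a minor monotone parameter with the ceiling of a non-monotone one. The only thing to be careful about is invoking minor monotonicity of $\nu$ for the exact notion of minor used in the paper (edge deletions, isolated vertex deletions, and edge contractions), which matches the definition given in the excerpt, so no additional work is needed.
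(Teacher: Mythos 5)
Your proof is correct and is exactly the argument the paper intends: it states that the result ``follows directly'' from Holst's inequality $\nu(G)\geq\kappa(G)$ together with the minor monotonicity of $\nu$, which is precisely your ``take a witnessing minor and push up'' chain. No differences to report.
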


We note here that in fact Maehara's conjecture, as referenced much earlier in \cite{LSS1989}, is equivalent to the $\delta$-conjecture stated for $\mrp(G)$, namely is it true for any graph $G$, $\mrp(G) \leq \ord G-\delta(G)$, or equivalently $M_+(G)\geq \delta(G)$? The progress in this work, refer to Section \ref{subsec_delta}, resolves Conjecture \ref{delta} for several classes of graphs with specific restrictions such as the girth, the minimum degree, and under the constraint of certain classes of forbidden subgraphs (see Theorem \ref{thm_partial_proof_of_delta} for more details). 

In Section \ref{degen-sec}, we incorporate the concept of graph degeneracy along with connections to the $\delta$-conjecture to provide stronger bounds for all three inequalities in Conjecture \ref{gccnu} (see Section \ref{subsec_deg_WGCC}). Finally, we conclude the paper in Section~\ref{sec_other_lower_bounds} with additional queries regarding potentially interesting lower bounds on the parameter $\nu
(G)$, incorporating other well-known graph parameters.

\section{Weak graph complement and delta conjectures: consequences of Mader's theorem}\label{sec_Mader_Consequences}
In this section, we appeal to a famous result of Mader to support progress on the Weak Graph Complement Conjectures by aiming to establish a universal constant $b_\nu$ in Conjecture \ref{gccnu}. Additionally, we use this result to affirm the $\delta$-conjecture~\ref{delta} under certain constraints, specifically in relation to the girth of a graph and the exclusion of particular forbidden subgraphs. Recall that the girth of a graph is the length of a shortest cycle contained in the graph. 

Let $G$ be a graph of order $n$ and size $m$, and let the average degree of $G$ be denoted by $d(G) := 2m/n$. We denote its minor monotone ceiling by $\lceil d \rceil(G)$. The following classical result of Mader \cite{Mader1972} establishes that large average degree guarantees the existence of a subgraph with high connectivity. 
\begin{thm}[Mader \cite{Mader1972}]\label{thm_Mader1972}
     Let $G$ be a graph of order $n$ and size $m$. If $n\geq 2k-1$ and $m\geq (2k-3)(n-k+1)+1$, then $G$ contains a $k$-connected subgraph. In particular, the same conclusion holds if $d(G)\geq 4(k-1)$.
\end{thm}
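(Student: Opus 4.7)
The plan is to prove the main assertion by induction on $n$, and then derive the density consequence as a corollary. For the base case $n = 2k-1$, the hypothesis $m \geq (2k-3)k + 1 = \binom{2k-1}{2}$ forces $G = K_{2k-1}$, which is $(2k-2)$-connected and therefore $k$-connected.

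For the inductive step with $n \geq 2k$, I first reduce to the case $\delta(G) \geq 2k-2$. If some vertex $v$ has $\deg_G(v) \leq 2k-3$, then $G-v$ has $n-1 \geq 2k-1$ vertices and at least $(2k-3)(n-k+1)+1-(2k-3) = (2k-3)\bigl((n-1)-k+1\bigr)+1$ edges, so the inductive hypothesis applied to $G-v$ produces a $k$-connected subgraph of $G-v \subseteq G$ and we are done. So I may assume $\delta(G) \geq 2k-2$.

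If $G$ is itself $k$-connected, take $H=G$. Otherwise pick a minimum vertex cut $S$ of $G$, so $|S| \leq k-1$, and write $V(G)=A \cup B$ with $A \cap B = S$, both $A \setminus S$ and $B \setminus S$ nonempty, and no edges between them. Any vertex $v \in A \setminus S$ has all of its $\geq 2k-2$ neighbors inside $A$, forcing $|A| \geq 2k-1$; symmetrically $|B| \geq 2k-1$. The inductive hypothesis now applies to both $G[A]$ and $G[B]$: if either meets the edge lower bound of the theorem for its own order, we are done. Otherwise, with $n_1 = |A|$, $n_2 = |B|$ and $n_1+n_2 = n+|S| \leq n+k-1$,
\[
m + e(G[S]) = e(G[A]) + e(G[B]) \leq (2k-3)(n_1+n_2-2k+2) \leq (2k-3)(n-k+1),
\]
contradicting $m \geq (2k-3)(n-k+1)+1$.

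For the ``in particular'' clause, the hypothesis $d(G) \geq 4(k-1)$ gives $m \geq 2(k-1)n$, and a direct comparison shows that $2(k-1)n \geq (2k-3)(n-k+1)+1$ reduces to the trivially true $n + (2k-3)(k-1) \geq 1$. Moreover $n-1 \geq \Delta(G) \geq d(G) \geq 4(k-1)$ forces $n \geq 4k-3 \geq 2k-1$, so the first part applies. I expect the crucial step to be the degree-reduction giving $\delta(G) \geq 2k-2$: without it, a ``small side'' of a separator would not admit induction and the clean edge-count bookkeeping would break down, so the fact that $2k-2$ is exactly enough to force both sides of any separator to have at least $2k-1$ vertices is where the constants of the theorem are tuned.
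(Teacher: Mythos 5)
The paper does not prove this statement at all---it is imported verbatim as a classical result of Mader, with \cite{Mader1972} (and the standard textbook treatment, e.g.\ Diestel) as the source---so there is no in-paper proof to compare against. Your argument is correct and is essentially the standard proof: the base case $n=2k-1$ forces $K_{2k-1}$ since $(2k-3)k+1=\binom{2k-1}{2}$; the degree reduction preserves the edge hypothesis because deleting a vertex of degree at most $2k-3$ decreases the bound $(2k-3)(n-k+1)+1$ by exactly $2k-3$; the minimum degree $2k-2$ forces both sides of any separator of size at most $k-1$ to have at least $2k-1$ vertices; and the double-counting identity $m+e(G[S])=e(G[A])+e(G[B])$ together with $n_1+n_2\le n+k-1$ yields the contradiction. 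The computation for the ``in particular'' clause, $2(k-1)n-(2k-3)(n-k+1)-1=n+(2k-3)(k-1)-1\ge 0$, is also right. The only caveat worth recording is that the whole statement (and your base case, which needs $K_{2k-1}$ to be $k$-connected, i.e.\ $2k-2\ge k$) tacitly assumes $k\ge 2$; for $k=1$ the bound $(2k-3)(n-k+1)+1$ is vacuous and the claim fails for edgeless graphs. This is a standing convention for Mader's theorem and does not affect how the paper uses it.
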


For a graph $G$, the following theorem establishes a lower bound for $\nu(G)$ based on the number of edges in its complement. 
\begin{thm}\label{thm_lower_bound_size_of_complement}
    Let $G$ be a graph of order $n$ and size $m$, and let $m(G^c)$ be the size of $G^c$. 
    Then 
    $$\nu(G)> \frac{n-1}{2}-\sqrt{\frac{m(G^c)}{2}}.$$
\end{thm}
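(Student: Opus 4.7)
The plan is to combine Theorem~\ref{thm_LSS_vertex_connectivity}, which furnishes $\nu(G) \geq \lceil \kappa \rceil(G)$, with the sharp edge-count form of Mader's Theorem~\ref{thm_Mader1972}. Since $\lceil \kappa \rceil(G) \geq \kappa(H)$ for every subgraph $H$ of $G$, it suffices to exhibit a subgraph of $G$ whose vertex connectivity strictly exceeds $\tfrac{n-1}{2} - \sqrt{m(G^{c})/2}$. The weaker average-degree form $d(G) \geq 4(k-1)$ loses a factor of two at this step (it would only produce connectivity $\sim n/4$), so the precise edge-count hypothesis is essential.

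Set $m' := m(G^{c})$, so that $m = \binom{n}{2} - m'$. Substituting this into $m \geq (2k-3)(n-k+1) + 1$ and expanding, Mader's edge hypothesis is equivalent to the quadratic inequality
\begin{align*}
2k^{2} - (2n+5)k + \tfrac{(n+1)(n+4)}{2} - m' \geq 0,
\end{align*}
whose discriminant works out cleanly to $9 + 8m'$ and whose smaller root is $k_{-} = \tfrac{(2n+5) - \sqrt{9 + 8m'}}{4}$. I will set $k := \lfloor k_{-} \rfloor$, so Mader's edge condition is satisfied at this $k$. The accompanying hypothesis $n \geq 2k-1$ is automatic, since it reduces to $\sqrt{9+8m'} \geq 3$, i.e., $m' \geq 0$. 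Mader's theorem then produces a $k$-connected subgraph $H \subseteq G$, and Theorem~\ref{thm_LSS_vertex_connectivity} gives $\nu(G) \geq \lceil \kappa \rceil(G) \geq \kappa(H) \geq k$.

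The remaining task is to verify $k > \tfrac{n-1}{2} - \sqrt{m'/2}$. Since $\lfloor k_{-} \rfloor > k_{-} - 1$, it is enough to show
\begin{align*}
\tfrac{(2n+1) - \sqrt{9+8m'}}{4} \geq \tfrac{n-1}{2} - \sqrt{\tfrac{m'}{2}},
\end{align*}
which, after clearing fractions and isolating the radicals, becomes $\sqrt{9+8m'} \leq 3 + 2\sqrt{2m'}$. Both sides are nonnegative, so squaring reduces the inequality to $0 \leq 12\sqrt{2m'}$, which is immediate. I do not foresee any genuine obstacle; the only point requiring care is keeping the algebra tidy so that the discriminant collapses to exactly $9 + 8m'$ and the final comparison reproduces precisely the $\sqrt{m'/2}$ term in the statement rather than a slightly different constant.
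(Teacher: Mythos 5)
Your proof is correct and follows essentially the same route as the paper: apply Mader's Theorem~\ref{thm_Mader1972} to extract a $k$-connected subgraph with $k$ the floor of the smaller root of the resulting quadratic, invoke Theorem~\ref{thm_LSS_vertex_connectivity}, and then compare $k_- - 1$ with the target bound. The only difference is cosmetic: you substitute $m = \binom{n}{2} - m(G^c)$ before solving the quadratic (getting discriminant $9+8m'$), while the paper keeps $m$ and finishes with the subadditivity of the square root, which is exactly your final inequality $\sqrt{9+8m'} \leq 3 + 2\sqrt{2m'}$ in disguise.
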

\begin{proof}
    Using Theorem \ref{thm_Mader1972} and the elementary quadratic formula, $G$ contains a $k$-connected subgraph where $k$ is the greatest integer less than or equal to $\frac{n}{2}+\frac{5}{4}-\sqrt{\frac{n^2-n-2m}{4}+\frac{9}{16}}$. Thus, by using Theorem \ref{thm_LSS_vertex_connectivity} we obtain
    \begin{align*}
    \nu(G)&\geq \lceil \kappa \rceil(G) 
    > \frac{n}{2}+\frac{5}{4}-\sqrt{\frac{n^2-n-2m}{4}+\frac{9}{16}}-1 \geq \frac{n}{2}+\frac{5}{4}-\sqrt{\frac{n^2-n-2m}{4}}-\sqrt{\frac{9}{16}}-1\\
    &= \frac{n-1}{2}-\sqrt{\frac{n^2-n-2m}{4}}
    = \frac{n-1}{2}-\sqrt{\frac{m(G^c)}{2}}. \qedhere
    \end{align*}
\end{proof}

\subsection{Resolution of the Weak Graph Complement Conjectures}
We begin this subsection by resolving the three Weak Graph Complement Conjectures by demonstrating that $b,\ b_+$, and $b_\nu$ in Conjecture \ref{gccnu} can be chosen to be close to $1+\frac{1}{\sqrt{2}} < 1.708$ for large $n$. 

\begin{thm} \label{wggc-nu} 
Let $G$ be a graph of order $n \geq 4$. Then we have 
    \[\mrn(G) + \mrn(G^c) < \left(1+\frac{1}{\sqrt{2}}\right)n+1.\]
\end{thm}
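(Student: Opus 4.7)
The plan is to convert the target inequality to a lower bound on $\nu(G) + \nu(G^c)$, apply Theorem \ref{thm_lower_bound_size_of_complement} separately to $G$ and to $G^c$, and combine the two bounds using the Cauchy--Schwarz inequality together with the identity $m(G)+m(G^c)=\binom{n}{2}$. Since $\mrn(H)=\ord H - \nu(H)$, the desired conclusion is equivalent to
\[
\nu(G) + \nu(G^c) > \left(1 - \tfrac{1}{\sqrt{2}}\right) n - 1,
\]
so it is enough to produce a lower bound of this strength on the sum.

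First I would invoke Theorem \ref{thm_lower_bound_size_of_complement} on $G$ and on $G^c$ (noting $(G^c)^c=G$), yielding
\[
\nu(G) + \nu(G^c) > (n-1) - \left(\sqrt{\tfrac{m(G)}{2}} + \sqrt{\tfrac{m(G^c)}{2}}\right).
\]
Next, by the Cauchy--Schwarz inequality $\sqrt{a}+\sqrt{b}\le\sqrt{2(a+b)}$ applied with $a=m(G)/2$ and $b=m(G^c)/2$, and using $m(G)+m(G^c)=\binom{n}{2}=\tfrac{n(n-1)}{2}$, I obtain
\[
\sqrt{\tfrac{m(G)}{2}} + \sqrt{\tfrac{m(G^c)}{2}} \le \sqrt{m(G)+m(G^c)} = \sqrt{\tfrac{n(n-1)}{2}} < \tfrac{n}{\sqrt{2}}.
\]
Substituting back and rewriting via $\mrn(H)=\ord H-\nu(H)$ then gives
\[
\mrn(G)+\mrn(G^c) = 2n - \nu(G) - \nu(G^c) < 2n - (n-1) + \tfrac{n}{\sqrt{2}} = \left(1+\tfrac{1}{\sqrt{2}}\right)n + 1,
\]
which is exactly the claim.

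There is no real obstacle here. The Cauchy--Schwarz step is tight precisely when $m(G)\approx m(G^c)$, i.e.\ when both graphs have comparable density, which is exactly the regime where a single application of Theorem \ref{thm_lower_bound_size_of_complement} to the denser side would not suffice; when the edge counts are badly unbalanced the Cauchy--Schwarz estimate wastes only a little, and Theorem \ref{thm_lower_bound_size_of_complement} applied to the sparser side alone essentially delivers the bound. The role of the hypothesis $n\ge 4$ is merely to ensure that the Mader-type $k$ produced inside Theorem \ref{thm_lower_bound_size_of_complement} is in a nontrivial range; the strict inequality in the conclusion comes for free from $\sqrt{n(n-1)/2}<n/\sqrt{2}$, which holds for every $n\ge 1$.
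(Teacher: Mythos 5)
Your proposal is correct and follows essentially the same route as the paper: apply Theorem \ref{thm_lower_bound_size_of_complement} to both $G$ and $G^c$, sum the two bounds, and use $m(G)+m(G^c)=\tfrac{n(n-1)}{2}$ to control $\sqrt{m(G)}+\sqrt{m(G^c)}$. The only cosmetic difference is that you invoke Cauchy--Schwarz where the paper substitutes the balanced case $m(G)=m(G^c)=\tfrac{n(n-1)}{4}$ directly; these are the same estimate.
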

\begin{proof}
   Let $m(G)$ and $m(G^c)$ be the sizes of $G$ and $G^c$, respectively. By using Theorem \ref{thm_lower_bound_size_of_complement}, we have that $\nu(G) > \frac{n-1}{2}-\sqrt{\frac{m(G^c)}{2}}$ and $\nu(G^c) > \frac{n-1}{2}-\sqrt{\frac{m(G)}{2}}$. Therefore,
   \begin{align*}
       \mrn(G) + \mrn(G^c) = 2n - (\nu(G)+\nu(G^c)) < n+1+\left(\frac{\sqrt{m(G)}+\sqrt{m(G^c)}}{\sqrt{2}}\right).
   \end{align*}
   Since $m(G) + m(G^c) = \frac{n(n-1)}{2}$ we obtain
   \begin{align*}
       \mrn(G) + \mrn(G^c) 
       &< n+1+\left(\frac{\sqrt{\frac{n(n-1)}{4}}+\sqrt{\frac{n(n-1)}{4}}}{\sqrt{2}}\right) < n+1+\left(\frac{\sqrt{\frac{n^2}{4}}+\sqrt{\frac{n^2}{4}}}{\sqrt{2}}\right)
       = \left(1+\frac{1}{\sqrt{2}}\right)n + 1. \qedhere
   \end{align*}
\end{proof}

\begin{remark}
    In summary, to prove the Weak Graph Complement Conjecture, we used Mader's Theorem \ref{thm_Mader1972} to show that the graph $G$ and its complement $G^c$ must each contain subgraphs $H_1$ and $H_2$, respectively, such that $\kappa(H_1)+\kappa(H_2) > \left(1-\frac{1}{\sqrt{2}}\right)n$. However, this strategy of identifying highly connected subgraphs in both a graph $G$ and its complement $G^c$ cannot improve the bound in Conjecture \ref{gccnu} beyond $\frac{3}{2}\ord G$. This limitation is illustrated by the following graph family, originally presented in \cite{Matula1983}. 
    
    Start with the path graph $P_4$, and let $s\geq 1$ be an integer. Replace each of the two end vertices of $P_4$ with a copy of the complete graph $K_s$, and each of the two internal vertices with a copy of $K_s^c$. Each edge in $P_4$ is then replaced with all possible edges between the corresponding sets of vertices in the replacements, effectively forming a complete bipartite connection. The resulting graph has $4s$ vertices, is self-complementary, and the maximum vertex-connectivity of a subgraph in it is $s$. This construction was given by Matula in \cite{Matula1983} in the context of a Ramsey-type problem for graph connectivity, serving as a lower-bound example.

    However, we believe that it is worth noting a few possible improvements to the bounds of Theorem \ref{wggc-nu} under certain assumptions on a graph $G$. First, if we assume that the graph $G$ is sufficiently sparse (for example, $m(G) \leq \frac{1}{100}n^2$), then the bound of Theorem \ref{wggc-nu} can be improved from $1.708$ to $1.566$ using the same proof as Theorem \ref{wggc-nu}. Second, one could employ better but more restrictive bounds similar to that of Mader's Theorem \ref{thm_Mader1972} (see \cite{Yuster2003, Bernshteyn2016}) to obtain a slight improvement to Theorem \ref{wggc-nu} under the assumption that the graph is sufficiently sparse. However, we opt not to pursue this direction.
\end{remark}

\subsection{Constrained resolution of the delta conjecture}\label{subsec_delta}
We begin with a result that $\nu(G)$ is at least one-quarter of the value predicted by the $\delta$-conjecture~\ref{delta}.
\begin{thm}\label{thm_nu_is_one_fourth_of_delta}
    Let $G$ be a given graph. Then $\nu(G) > \frac{\lceil d \rceil(G)}{4}$. In particular, $\nu(G) > \frac{\lceil \delta \rceil (G)}{4}$. 
\end{thm}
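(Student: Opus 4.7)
The plan is to combine Mader's theorem with the minor-monotonicity of $\nu$ and Theorem \ref{thm_LSS_vertex_connectivity}, which already tells us that $\nu$ dominates the minor-monotone ceiling of vertex connectivity. The overall strategy is: pass to a minor of $G$ with largest average degree, use Mader to pull a highly connected subgraph out of it, and then recognize that this subgraph is itself a minor of $G$, which forces $\lceil \kappa \rceil(G)$ (and hence $\nu(G)$) to be large.

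Concretely, I would first fix a minor $H$ of $G$ attaining $d(H) = \lceil d \rceil(G)$; such a minor exists because $G$ has only finitely many minors up to isomorphism. Then I would set $k := \lfloor \lceil d \rceil(G)/4 \rfloor + 1$, so that by construction $4(k-1) \le \lceil d \rceil(G) = d(H)$ and $k > \lceil d \rceil(G)/4$. The second clause of Theorem \ref{thm_Mader1972} (Mader's theorem) then yields a $k$-connected subgraph $H'$ of $H$. Since $H'$ is a subgraph of the minor $H$ of $G$, it is itself a minor of $G$, so $\lceil \kappa \rceil(G) \ge \kappa(H') \ge k$. Applying Theorem \ref{thm_LSS_vertex_connectivity} gives
\[
\nu(G) \ge \lceil \kappa \rceil(G) \ge k > \frac{\lceil d \rceil(G)}{4},
\]
which is the desired inequality.

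For the \emph{in particular} clause, I would use the elementary fact that $\delta(F) \le d(F)$ for every graph $F$ (the minimum degree is at most the average degree). Taking the maximum over all minors $F$ of $G$ gives $\lceil \delta \rceil(G) \le \lceil d \rceil(G)$, and combining with what was just proved yields $\nu(G) > \lceil \delta \rceil(G)/4$.

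There is essentially no serious obstacle: the argument is a short chain of the form \emph{minor of a minor is a minor} and \emph{subgraph is a minor}, together with the two black boxes (Mader and Theorem \ref{thm_LSS_vertex_connectivity}). The only points needing mild care are (i) choosing the integer $k$ so that Mader is applicable and the strict inequality is produced (handled by the $+1$ in the definition of $k$), and (ii) the trivial edge case $\lceil d \rceil(G) = 0$, where both sides vanish and the statement is interpreted in the obvious way.
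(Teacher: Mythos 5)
Your proposal is correct and follows essentially the same route as the paper: pass to a minor attaining $\lceil d \rceil(G)$, apply the average-degree form of Mader's theorem to extract a highly connected subgraph (which is again a minor of $G$), and conclude via Theorem \ref{thm_LSS_vertex_connectivity}; the ``in particular'' clause likewise uses $\lceil \delta \rceil(G) \leq \lceil d \rceil(G)$. Your explicit choice of $k = \lfloor \lceil d \rceil(G)/4 \rfloor + 1$ just makes precise the strict inequality that the paper states more tersely.
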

\begin{proof} Consider a minor $G'$ of $G$ such that $d(G')= \lceil d \rceil(G)$. Mader's Theorem \ref{thm_Mader1972} implies that there exists a subgraph $G''$  of $G'$ (in turn, a minor of $G$) such that $\kappa(G'') > \frac{d(G')}{4} = \frac{\lceil d \rceil(G)}{4}$. Therefore, the first assertion follows from Theorem \ref{thm_LSS_vertex_connectivity}. The second assertion follows immediately since $\lceil d \rceil (G)\geq \lceil \delta \rceil (G)$.
\end{proof}

Thus, if one can show that a graph $G$, under certain assumptions, satisfies $\lceil \delta \rceil(G) \geq 4 \delta (G)$, then $G$ would satisfy the $\delta$-Conjecture~\ref{delta}. In this direction, we present several results illustrating that increasing the girth of a graph can enforce the existence of a minor whose minimum degree is significantly larger than that of the original graph.

\begin{prop}\label{prop_Mader1998}
    Let $G$ be a graph with minimum degree $\delta$ and girth $g$, and let $k\geq 1$ be an integer. If $\delta \geq 3$ and $g\geq 8k+3$, then $\lceil \delta \rceil(G)\geq \delta (\delta-1)^k$.
\end{prop}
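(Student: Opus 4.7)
The plan is to exhibit a minor $H$ of $G$ with minimum degree at least $\delta(\delta-1)^k$, which immediately yields $\lceil \delta \rceil(G) \geq \delta(\delta-1)^k$. The idea is to exploit the large girth to find BFS balls with clean tree structure, and then contract them carefully to form $H$.

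First I would observe that the girth hypothesis forces every short-radius BFS ball in $G$ to induce a tree. Concretely, for any $s \leq 4k$ and any vertex $v$, a cycle inside $B_s(v)$ would, together with a shortcut through $v$ for one of its edges, produce a cycle in $G$ of length at most $2s+1 \leq 8k+1$, contradicting $g \geq 8k+3$. In particular, each ball $B_k(v)$ is a tree and, since $\delta(G)\geq \delta \geq 3$, its BFS levels satisfy $|N_j(v)| \geq \delta(\delta-1)^{j-1}$ for $1\leq j\leq k$. So the $k$-th shell of $B_k(v)$ contains at least $\delta(\delta-1)^{k-1}$ vertices, each still sending $\delta-1$ fresh edges outward to level $k+1$, totaling at least $\delta(\delta-1)^k$ edges leaving $B_k(v)$ in $G$.

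To build $H$ I would follow a BFS-packing argument in the spirit of Mader. Pick a maximal set $W\subseteq V(G)$ with pairwise distances strictly greater than $2k$, so that the balls $B_k(w)$, $w\in W$, are pairwise disjoint; by maximality, every vertex lies within distance $2k$ of some $w\in W$. Extend each $B_k(w)$ into a connected branch set $B_w$ by assigning the remaining vertices to a nearest center along shortest paths, so that $\{B_w : w\in W\}$ partitions $V(G)$ and each $B_w$ has $G$-diameter at most $4k$. Contracting each $B_w$ to a single vertex $\bar w$ produces a minor $H$ with vertex set $W$. For each $w\in W$, the $\delta(\delta-1)^k$ edges leaving $B_k(w)$ computed above give rise to at least as many edges leaving $B_w$, and I would claim that they reach $\delta(\delta-1)^k$ distinct branch sets, giving $\deg_H(\bar w)\geq \delta(\delta-1)^k$.

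The heart of the matter is this last claim, and it is where the full girth budget is consumed. If two such outgoing edges $e_1,e_2$ landed in the same $B_{w'}$, one could splice them with their BFS paths of length $k$ to $w$ inside $B_w$ and a path of length at most $4k$ inside $B_{w'}$ to obtain a closed walk of length at most $k + k + 1 + 4k + 1 = 6k+2 < g$ using distinct edges, hence a cycle in $G$ shorter than $g$, a contradiction. This careful path-length bookkeeping is what I expect to be the main technical obstacle; it is exactly the content of Mader's 1998 argument, and I would either invoke his theorem directly or reproduce this verification to secure the bound.
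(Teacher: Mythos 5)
The paper does not prove this proposition itself; it defers to Diestel (Lemma 7.2.5) and K\"uhn--Osthus (Proposition 6), and your proposal reconstructs the architecture of that cited proof correctly: a maximal $2k$-separated set $W$, connected branch sets $B_w\supseteq B_k(w)$ of radius at most $2k$, and a girth argument showing no two edges of $G$ join the same pair of branch sets. The gap is in your edge count. You assert that the $\delta(\delta-1)^k$ edges leaving $B_k(w)$ ``give rise to at least as many edges leaving $B_w$.'' That step fails: since $B_w$ in general properly contains $B_k(w)$ (it absorbs vertices at distance up to $2k$ from $w$ whose nearest center is $w$), an edge from the $k$-th shell may land in $B_w\setminus B_k(w)$ and therefore not leave $B_w$ at all. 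If, say, $B_{k+1}(w)\subseteq B_w$ (which happens whenever the other centers are far enough from $w$), then \emph{every} edge you counted is internal to $B_w$ and your lower bound on $\deg_H(\bar w)$ collapses. Your girth computation only excludes two boundary edges reaching the same \emph{other} branch set $B_{w'}$; it cannot exclude an edge $uv$ staying inside $B_w$, because the associated closed walk (edge $uv$, a shortest $v$--$w$ path, the tree path $w$--$u$) degenerates precisely when the shortest $v$--$w$ path passes through $u$, so no short cycle is produced.

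The repair --- and this is how the cited proof actually counts --- is to bound the edges leaving all of $B_w$, not just those leaving $B_k(w)$. Since $g\geq 8k+3>4k+1$, the ball $B_{2k}(w)$ induces a tree, hence so does the connected set $B_w\subseteq B_{2k}(w)$; consequently the number of edges of $G$ with exactly one end in $B_w$ is at least $\sum_{v\in B_w}\deg_G(v)-2(|B_w|-1)\geq(\delta-2)|B_w|+2\geq(\delta-2)|B_k(w)|+2=\delta(\delta-1)^k$, using $|B_k(w)|\geq 1+\delta\bigl((\delta-1)^k-1\bigr)/(\delta-2)$. One then applies your injectivity step to \emph{arbitrary} boundary edges of $B_w$, whose endpoints sit at distance up to $2k$ from their respective centers; this is exactly where the full girth budget $4k+4k+2=8k+2<g$ is spent (your $6k+2$ reflects only the special case of endpoints on the $k$-th shell). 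With these two changes the argument closes and gives $\deg_H(\bar w)\geq\delta(\delta-1)^k$ as required.
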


See \cite[Lemma 7.2.5]{Diestel} or \cite[Proposition 6]{Kuhn_Osthus_2003} for a proof of Proposition \ref{prop_Mader1998}. It is noted in \cite{Kuhn_Osthus_2003}, before the proof, that ``\emph{its proof is the same as the beginning of Mader’s proof of his main result of \cite{Mader1998}}''. K{\"u}hn and Osthus \cite{Kuhn_Osthus_2003} used a probabilistic version of Mader's argument to improve and generalize Proposition \ref{prop_Mader1998} as follows.

\begin{thm}[K{\"u}hn and Osthus \cite{Kuhn_Osthus_2003}]\label{thm_Kuhn_Osthus_2003}
    Let $G$ be a graph with minimum degree $\delta$ and girth $g$, and let $k\geq 1$ be an integer. Then:
    \begin{enumerate}[(a)]
        \item If $\delta\geq 3$ and $g \geq 4k+3$, then $\lceil \delta \rceil (G) \geq \frac{(\delta-1)^{k+1}}{48}$.
        \item If $\delta \geq \max\{20k,8\cdot 10^6\}$ and $g \geq 4k+1$, then $\lceil \delta \rceil (G) \geq \frac{\delta^{k+1/2}}{288}$.
    \end{enumerate}
\end{thm}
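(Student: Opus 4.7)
The plan is to execute the probabilistic refinement of Mader's tree-and-contraction construction from \cite{Mader1998}, as done by K\"uhn and Osthus in \cite{Kuhn_Osthus_2003}. Fix an integer $r$ close to $k$ (namely $r = k$ in part~(a), and a slightly smaller integer in part~(b)), and for each vertex $v \in V(G)$ let $B_v$ denote the breadth-first search tree of depth $r$ rooted at $v$. The girth hypothesis ($g \geq 4k+3$ for (a), $g \geq 4k+1$ for (b)) forces every closed walk of length at most $2r$ in $G$ to be trivial, so $B_v$ is an induced subtree of $G$ and every internal vertex of $B_v$ has all of its $G$-neighbors inside $B_v$. In particular, $B_v$ carries at least $\delta(\delta-1)^{r-1}$ leaves, each of which has $\delta-1$ further neighbors in $V(G) \setminus V(B_v)$.

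The trees $B_v$ will serve as branch sets of a minor $H$ of $G$: contract each chosen $B_v$ to a single vertex, and declare two such vertices adjacent in $H$ when the underlying subtrees are joined by an edge of $G$. If we could pick a family of pairwise vertex-disjoint $B_v$'s, the degree of each vertex of $H$ would be of order $\delta(\delta-1)^r$, which already gives the desired bound up to a constant. The difficulty is that distinct $B_v$'s typically overlap. I would remove this obstacle by a probabilistic selection in the style of K\"uhn--Osthus: assign each vertex an independent uniform label in $[0,1]$, retain only those $v$ whose label is minimal within a prescribed neighborhood of $v$, and use the resulting (nearly disjoint) collection of subtrees. A direct expectation calculation then lower-bounds the mean number of distinct external neighbor-trees reached from $B_v$ by something of order $(\delta-1)^{k+1}$; the girth hypothesis is precisely what keeps the local geometry tree-like and makes these expectations computable in closed form.

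The principal obstacle is to pass from an average-degree estimate to a minimum-degree statement, while tracking the universal constants $1/48$ in (a) and $1/288$ in (b). I would address this by an expectation-plus-alteration scheme: first lower bound the expected useful degree at each chosen branch set, then delete a small proportion of overloaded or under-performing branch sets, using a martingale concentration inequality (for example Azuma--Hoeffding applied to the label vector) to verify that only a controlled fraction of branch sets need be removed. The improved bound in part~(b) comes from the stronger concentration available when $\delta$ is much larger than $k$, which is the content of the hypothesis $\delta \geq \max\{20k, 8\cdot 10^6\}$. Since every resulting $H$ is by construction a minor of $G$, the lower bound on $\delta(H)$ transfers directly to $\lceil \delta \rceil(G)$, completing the argument.
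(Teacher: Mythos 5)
First, note that the paper does not prove this statement at all: it is imported verbatim from K\"uhn and Osthus \cite{Kuhn_Osthus_2003}, so there is no internal proof to compare your attempt against. Judged on its own terms, your proposal correctly identifies the engine of the K\"uhn--Osthus argument (a probabilistic refinement of Mader's construction: contract depth-$r$ BFS balls, which the girth hypothesis keeps tree-like and vertex-efficient, into branch sets of a minor), but it is a plan rather than a proof. Every step that actually produces the theorem is left unexecuted: you never fix the radius $r$ and the selection rule precisely, never carry out the ``direct expectation calculation,'' never run the ``expectation-plus-alteration scheme,'' and never show where the constants $1/48$ and $1/288$ come from --- those constants are exactly the residue of the deletion and concentration step you defer, so without that computation the statement as given is not established.

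Second, there is a substantive issue you name but do not resolve: the conclusion concerns $\lceil \delta \rceil(G)$, so you must exhibit a minor in which \emph{every} branch set is joined to many other branch sets, not merely one whose contracted graph has large average degree. Your proposed fix (delete ``overloaded or under-performing branch sets'' and control the damage by Azuma--Hoeffding) is plausible in outline, but deleting branch sets lowers the degrees of the survivors, so one needs a quantitative argument that after the alteration every remaining branch set still reaches $\Omega\left((\delta-1)^{k+1}\right)$ distinct others; this is precisely the technical heart of \cite{Kuhn_Osthus_2003} and it is absent here. A smaller imprecision: for the depth-$r$ ball $B_v$ to be an induced tree \emph{and} for the external neighbours of its leaves to lie in distinct balls $B_w$, you need to exclude cycles of length up to roughly $4r+2$, not $2r$; that is why the hypothesis reads $g \geq 4k+3$ rather than $g \geq 2k+2$, and your sketch should make this dependence explicit, since it governs the admissible choice of $r$ and hence the exponent $k+1$ in the final bound.
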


As an immediate consequence of Theorem \ref{thm_nu_is_one_fourth_of_delta} and Theorem \ref{thm_Kuhn_Osthus_2003}, we obtain the following result, which implies that if the girth is sufficiently large, then $\nu(G)$ is superlinear in the minimum degree of  $G$. 
\begin{cor}\label{cor_bound_on_nu_via_girth}
    Let $G$ be a graph with minimum degree $\delta$ and girth $g$ and let $k\geq 1$ be an integer. Then:
    \begin{enumerate}[(a)]
        \item If $\delta\geq 3$ and $g \geq 4k+3$, then $\nu(G) > \frac{(\delta-1)^{k+1}}{192}$.
        \item If $\delta \geq \max\{20k,8\cdot 10^6\}$ and $g \geq 4k+1$, then $\nu(G) > \frac{\delta^{k+1/2}}{1152}$.
    \end{enumerate} 
\end{cor}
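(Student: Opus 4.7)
The proof will be a direct chaining of the two results immediately preceding the corollary, so there is really no obstacle at all: this is a pure consequence-style corollary. The plan is to invoke Theorem \ref{thm_nu_is_one_fourth_of_delta} to pass from $\nu(G)$ to $\lceil \delta \rceil(G)/4$, and then invoke Theorem \ref{thm_Kuhn_Osthus_2003} to lower-bound $\lceil \delta \rceil(G)$ by the girth-dependent expression.

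More concretely, for part (a) I would assume the hypotheses $\delta \geq 3$ and $g \geq 4k+3$. Since Theorem \ref{thm_nu_is_one_fourth_of_delta} gives the unconditional inequality $\nu(G) > \lceil \delta \rceil (G)/4$, and since under these hypotheses Theorem \ref{thm_Kuhn_Osthus_2003}(a) yields $\lceil \delta \rceil (G) \geq (\delta-1)^{k+1}/48$, I can just combine the two to obtain
\[
\nu(G) \;>\; \frac{\lceil \delta \rceil (G)}{4} \;\geq\; \frac{(\delta-1)^{k+1}}{192},
\]
which is exactly the desired bound.

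Part (b) proceeds identically: assume $\delta \geq \max\{20k, 8 \cdot 10^6\}$ and $g \geq 4k+1$, so Theorem \ref{thm_Kuhn_Osthus_2003}(b) applies and gives $\lceil \delta \rceil (G) \geq \delta^{k+1/2}/288$. Chaining with Theorem \ref{thm_nu_is_one_fourth_of_delta} yields $\nu(G) > \delta^{k+1/2}/1152$.

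The only point worth flagging is that the strict inequality from Theorem \ref{thm_nu_is_one_fourth_of_delta} is preserved after composing with the (weak) inequality from Theorem \ref{thm_Kuhn_Osthus_2003}, which is why the corollary is stated with strict inequalities. No auxiliary lemmas, estimates, or case analysis are required; the entire proof is two inequalities placed end-to-end, with the arithmetic $4 \cdot 48 = 192$ and $4 \cdot 288 = 1152$ giving the stated constants.
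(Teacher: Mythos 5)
Your proof is correct and matches the paper's intended argument exactly: the paper presents this corollary as an immediate consequence of Theorem \ref{thm_nu_is_one_fourth_of_delta} and Theorem \ref{thm_Kuhn_Osthus_2003}, chained precisely as you describe, with the constants $4\cdot 48=192$ and $4\cdot 288=1152$.
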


Observe that a girth of at least $5$ is the minimum required for the applicability of the above result. Krivelevich and Sudakov \cite{krivelevich2009minors}  studied the existence of large minors in expanding graphs. Among other results, they also proved that if $G$ does not contain an even cycle or a complete bipartite graph, then $G$ contains a minor whose average degree is superlinear in the average degree of $G$, up to the product of a constant factor. Those results are as follows.  

\begin{thm}[Krivelevich and Sudakov \cite{krivelevich2009minors}]\label{thm_Krivelevich_and_Sudakov_2009}
    Let $G$ be a graph with average degree $d$. Then:
    \begin{enumerate}[(a)]
        \item If $G$ is $K_{s,s'}$-free, then $\lceil d \rceil(G) \geq c\cdot d^{1+\frac{1}{2(s-1)}}$, where $2\leq s\leq s'$ and $c = c(s,s')$ is some positive constant.
        \item If $G$ is $C_{2t}$-free, then $\lceil d \rceil(G) \geq c\cdot d^{\frac{t+1}{2}}$, where $t\geq 2$ and $c = c(t)$ is some positive constant.
    \end{enumerate}
\end{thm}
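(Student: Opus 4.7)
The plan is to follow the expansion-based minor construction pioneered by Koml\'os--Szemer\'edi and refined by Krivelevich and Sudakov: first extract an ``expanding'' subgraph from $G$, then use the forbidden-subgraph hypothesis to force rapid neighborhood growth in the expander, and finally contract carefully chosen breadth-first-search trees to produce a dense minor. Because both (a) and (b) share this scaffolding, I would prove the two parts in parallel and only specialize at the step where the extremal-graph bound is applied.

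\emph{Step 1: Expansion.} I would invoke the standard expander-extraction lemma, which asserts that every graph $G$ of average degree $d$ contains a subgraph $H$ of average degree $\Theta(d)$ in which every vertex subset $S$ with $|S|\le |V(H)|/2$ satisfies $|N_H(S)|\ge \varepsilon(|S|)\cdot |S|$, for an explicit expansion rate $\varepsilon$ that decays only polylogarithmically in $|S|$.  Such a subgraph is produced by iteratively trimming vertices whose neighborhoods are poorly expanding, using a charging argument to control the loss in average degree at each trimming step.

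\emph{Step 2: Neighborhood growth from the forbidden subgraph.} In the expander $H$, I would analyze how fast the balls $B_r(v)$ grow with $r$.  The crucial input is the forbidden subgraph: for part (a), the K\H{o}v\'ari--S\'os--Tur\'an theorem bounds the number of edges between any two disjoint vertex sets in a $K_{s,s'}$-free graph, which when combined with the expansion of $H$ prevents the boundary of $B_r(v)$ from collapsing too quickly; for part (b), the Bondy--Simonovits estimate $\mathrm{ex}(n;C_{2t})=O(n^{1+1/t})$ plays the analogous role, since a $C_{2t}$-free graph is locally much sparser than a random graph of the same density.  Together these yield lower bounds of the form $|B_r(v)|\ge c\cdot d^{\alpha r}$ for an appropriate exponent $\alpha$ depending on the forbidden configuration.

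\emph{Step 3: Minor construction and parameter optimization.} I would next greedily select vertices $v_1,\dots,v_k$ in $H$ whose balls $B_r(v_i)$ of a carefully chosen radius $r$ are pairwise disjoint, contract each $B_r(v_i)$ to a single super-vertex, and discard the unused vertices.  Two super-vertices are joined whenever their original balls were adjacent in $H$, and the growth bound of Step~2 combined with the expansion of $H$ guarantees that each super-vertex inherits many super-neighbors.  Balancing $r$, the number $k$ of centers, and the size of each ball then yields the promised exponents $1+\frac{1}{2(s-1)}$ in part (a) and $\frac{t+1}{2}$ in part (b).  The main obstacle, I expect, is calibrating this balance against the precise expansion rate from Step~1: the extremal bounds are tight only up to constants and the expansion lemma loses a polylogarithmic factor compared with a purely random model, so pinning the exponents on $d$ exactly---rather than losing a $\sqrt{\log d}$ factor as in the generic Kostochka--Thomason bound---requires arranging for the genuine bottleneck to be the forbidden-subgraph constraint and not the expansion step.
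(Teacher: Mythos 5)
This statement is imported verbatim from Krivelevich and Sudakov \cite{krivelevich2009minors}; the paper you are checking against offers no proof of it, only the citation. So the only meaningful comparison is between your sketch and the argument in the cited source. Your three-step scaffolding (expander extraction \`a la Koml\'os--Szemer\'edi, a locality/sparseness input from the forbidden subgraph, then contraction of disjoint connected pieces to build a dense minor) is indeed the strategy Krivelevich and Sudakov use, so you have correctly identified the right proof architecture.

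That said, what you have written is a plan rather than a proof, and the two places where the real work lives are exactly the ones you defer. First, in the actual argument the extremal bounds (K\H{o}v\'ari--S\'os--Tur\'an, Bondy--Simonovits) are used primarily as a \emph{global} count: an $H$-free graph of average degree $d$ must have at least $c\,d^{1+1/(s-1)}$ (resp.\ $c\,d^{t/(t-1)}$-type) vertices, and more importantly every small vertex subset spans few edges, which is the ``local sparseness'' fed into the minor construction; your Step 2 instead asserts exponential ball growth $|B_r(v)|\ge c\,d^{\alpha r}$ without deriving it, and such a bound does not follow from expansion plus $H$-freeness alone in the form stated. Second, you correctly flag that the generic expander-minor theorem loses a polylogarithmic factor, which would turn the claimed exponent into $d^{1+\frac{1}{2(s-1)}}/\sqrt{\log d}$, but you do not explain how to avoid it; in the source this is precisely where local sparseness re-enters, and without that mechanism your Step 3 optimization cannot produce the clean exponents in the statement. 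Since the theorem is cited rather than proved in this paper, none of this affects the paper, but as a standalone proof attempt the proposal has genuine gaps at both of these points.
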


As an immediate consequence of Theorem \ref{thm_nu_is_one_fourth_of_delta} and Theorem \ref{thm_Krivelevich_and_Sudakov_2009} we obtain the following result. 
\begin{cor}\label{cor_bound_on_nu_via_H_free}
    Let $G$ be a graph with average degree $d$. Then:
    \begin{enumerate}[(a)]
        \item If $G$ is $K_{s,s'}$-free, then $\nu(G) > \frac{c}{4}\cdot d^{1+\frac{1}{2(s-1)}}$, where $2\leq s\leq s'$ and $c = c(s,s')$ is some positive constant.
        \item If $G$ is $C_{2t}$-free, then $\nu(G) > \frac{c}{4}\cdot d^{\frac{t+1}{2}}$, where $t\geq 2$ and $c = c(t)$ is some positive constant.
    \end{enumerate}
\end{cor}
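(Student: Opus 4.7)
The corollary is explicitly labeled an ``immediate consequence'' of Theorem \ref{thm_nu_is_one_fourth_of_delta} and Theorem \ref{thm_Krivelevich_and_Sudakov_2009}, so the plan is simply to chain these two inequalities together for each of the two cases. The first step is to invoke Theorem \ref{thm_nu_is_one_fourth_of_delta}, which gives the universal bound
\[
\nu(G) > \frac{\lceil d \rceil(G)}{4}
\]
valid for every graph $G$, with no hypothesis on the forbidden subgraph structure. This reduces the task to producing a good lower bound on $\lceil d \rceil(G)$ under each of the structural hypotheses.

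For part (a), I would apply Theorem \ref{thm_Krivelevich_and_Sudakov_2009}(a) directly to $G$: since $G$ is assumed $K_{s,s'}$-free with $2 \le s \le s'$, there exists a positive constant $c = c(s,s')$ with
\[
\lceil d \rceil(G) \ge c \cdot d^{1 + \frac{1}{2(s-1)}}.
\]
Dividing by $4$ and combining with the previous display yields the desired inequality $\nu(G) > \frac{c}{4}\, d^{1 + \frac{1}{2(s-1)}}$. For part (b), the argument is identical, now using Theorem \ref{thm_Krivelevich_and_Sudakov_2009}(b) under the $C_{2t}$-free hypothesis to obtain $\lceil d \rceil(G) \ge c \cdot d^{(t+1)/2}$ for $t \ge 2$, and then dividing by $4$.

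There is essentially no obstacle here, so I would not expect to need any genuinely new idea. The one point I would double-check is that nothing about the Krivelevich--Sudakov theorem requires the minor achieving the large average degree to itself be $K_{s,s'}$-free or $C_{2t}$-free, because $\lceil d\rceil$ is applied to $G$ (not to the minor) and Theorem \ref{thm_nu_is_one_fourth_of_delta} requires no such hypothesis. Once that is confirmed, the corollary follows in two lines by substitution, with the strict inequality inherited from Theorem \ref{thm_nu_is_one_fourth_of_delta}.
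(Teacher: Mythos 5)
Your proposal is correct and matches the paper exactly: the paper gives no separate proof, stating the corollary as an immediate consequence of chaining Theorem \ref{thm_nu_is_one_fourth_of_delta} with Theorem \ref{thm_Krivelevich_and_Sudakov_2009}, which is precisely what you do. Your side remark that the structural hypothesis is applied to $G$ itself (not to the minor realizing $\lceil d\rceil(G)$) is a valid and worthwhile sanity check.
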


Finally, we are in a position to present our main contribution as a partial advancement toward $\delta$-Conjecture~\ref{delta}.
\begin{thm}\label{thm_partial_proof_of_delta}
    Let $G$ be a graph with minimum degree $\delta$ and girth $g$. Then $\nu(G) \geq \delta$, provided that one of the following conditions holds:
    \begin{enumerate}[(a)]
        \item $g\geq 11$ and $\delta\geq 4$;
        \item $g\in \{7,8,9,10\}$ and $\delta \geq 193$;
        \item $g \in \{5,6\}$ and $\delta \geq 8 \cdot 10^6$;
        \item $G$ is $K_{s,s'}$-free and $\delta\geq r$, where $2\leq s\leq s'$ and $r = r(s,s')$ is some positive constant;
        \item $G$ is $C_{2t}$-free and $\delta\geq r$, where $t\geq 2$ and $r = r(t)$ is some positive constant.
    \end{enumerate}
\end{thm}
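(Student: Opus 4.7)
The plan is to derive each of (a)--(e) as a corollary of Theorem \ref{thm_nu_is_one_fourth_of_delta}. That theorem reduces the desired conclusion $\nu(G) \geq \delta$ to producing a minor of $G$ whose minimum (or average) degree strictly exceeds $4(\delta-1)$: the strict inequality in Theorem \ref{thm_nu_is_one_fourth_of_delta}, combined with the integrality of $\nu(G)$ and $\delta$, then upgrades $\nu(G) > \delta - 1$ to $\nu(G) \geq \delta$. Thus the problem is entirely transferred into a question about ``amplifying'' degree by passing to minors, and each part of the theorem amounts to matching the hypotheses to the appropriate amplification result already collected in this section.

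For (a), the girth hypothesis $g \geq 11 = 8\cdot 1 + 3$ triggers Proposition \ref{prop_Mader1998} with $k=1$, yielding $\lceil \delta \rceil(G) \geq \delta(\delta - 1)$, which is at least $4(\delta - 1)$ precisely when $\delta \geq 4$. For (b), $g \geq 7 = 4\cdot 1 + 3$ invites Corollary \ref{cor_bound_on_nu_via_girth}(a) with $k=1$, and the resulting bound $\nu(G) > (\delta - 1)^2/192$ exceeds $\delta - 1$ exactly when $\delta \geq 193$. For (c), $g \in \{5,6\}$ together with $\delta \geq 8\cdot 10^6$ meets the hypotheses of Corollary \ref{cor_bound_on_nu_via_girth}(b) with $k=1$, and the stated threshold is comfortably above $1152^2$, which is what is needed to force $\delta^{3/2}/1152 \geq \delta$. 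For (d) and (e), Corollary \ref{cor_bound_on_nu_via_H_free} gives $\nu(G) > (c/4)\,d^{1+\epsilon}$ with exponent $\epsilon > 0$; using $d \geq \delta$ and superlinearity, any threshold $r$ large enough to ensure $(c/4)\,\delta^{\epsilon} \geq 1$ suffices, and such an $r$ depends only on the Krivelevich--Sudakov constant $c$, hence only on $s,s'$ (respectively on $t$).

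There is no conceptually difficult step; the main obstacle is choosing the sharpest amplification lemma in each girth regime. Corollary \ref{cor_bound_on_nu_via_girth}(a) and Proposition \ref{prop_Mader1998} pull in opposite directions: the former permits smaller $k$ but pays a constant factor of $192$, while the latter has no hidden constant but requires $g \geq 8k+3$. Case (a) is the place where this choice is decisive, because invoking Corollary \ref{cor_bound_on_nu_via_girth}(a) rather than Proposition \ref{prop_Mader1998} would force $\delta$ to be considerably larger than $4$; everywhere else the remaining work is just elementary verification of the listed numerical thresholds.
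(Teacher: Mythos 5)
Your proposal is correct and follows the same route as the paper: both deduce each part from Theorem \ref{thm_nu_is_one_fourth_of_delta} combined with Proposition \ref{prop_Mader1998} for (a), Corollary \ref{cor_bound_on_nu_via_girth} for (b) and (c), and Corollary \ref{cor_bound_on_nu_via_H_free} for (d) and (e). You simply spell out the numerical threshold checks (e.g.\ $\delta(\delta-1)/4 \geq \delta-1$ iff $\delta \geq 4$, and $8\cdot 10^6 > 1152^2$) that the paper leaves implicit.
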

\begin{proof}
Theorem \ref{thm_nu_is_one_fourth_of_delta} and Proposition \ref{prop_Mader1998} implies that $\nu(G) > \frac{\delta(G)(\delta(G)-1)}{4}$, and hence part~(a) follows. Corollary \ref{cor_bound_on_nu_via_girth}~(a) implies that $\nu(G) > \frac{(\delta-1)^2}{192}$, and hence part~(b) follows. Similarly, part~(c) implied by Corollary \ref{cor_bound_on_nu_via_girth}~(b). The last two parts follow immediately from Corollary \ref{cor_bound_on_nu_via_H_free}, as both bounds are superlinear in $d$, and hence in $\delta(G)$. This completes the proof.
\end{proof}

\begin{rem}
    In summary, to prove $\delta$-Conjecture~\ref{delta}, we showed that under a certain assumption on a graph $G$, $\lceil \kappa \rceil (G)\geq \delta(G)$. However, it is worth noting that this inequality does not hold for all graphs. For example, the Mader graph $M_{12}$ from \cite{Mader1968} has minimum degree $\delta = 5$, while $\lceil \kappa\rceil = 4$ (see \cite[Example A.16]{barioli2013parameters} for details).

    On the other hand, Theorem \ref{thm_partial_proof_of_delta} implies that if the girth of a graph $G$ is at least $5$ and the minimum degree is sufficiently large, then $\lceil \kappa \rceil (G) \geq \delta(G)$. Moreover, even when the girth is $3$ or $4$, the same inequality holds, provided that $G$ forbids an even cycle or a complete bipartite subgraph and its minimum degree is large enough depending on that forbidden subgraph. These observations lead to a natural question, which may be of independent interest: Does there exist a universal constant $c$ such that $\lceil \kappa \rceil(G) \geq \delta(G)$ for all graphs $G$ with $\delta(G)\geq c$?
\end{rem}
\section{Degeneracy of graphs}\label{degen-sec}

In this section, we study the degeneracy of a graph through the lens of the Weak Graph Complement Conjecture. To this end, we explore the properties of graph degeneracy with particular emphasis on investigating the sum $l(G) + l(\Gc)$. 

For any graph $G$, it is easy to verify that $\delta(G)+\delta(\Gc) \leq \ord G-1$, where the equality is only valid for regular graphs. Interestingly, even if $l(G)$ may be substantially larger than $\delta(G)$, a similar inequality holds for $l(G)$, as stated implicitly and proved in \cite{ChartrandMitchem}. We provide a short proof here for completeness.

\begin{prop} \label{lGprop}
    For any graph $G$
    \begin{equation} \label{lG+lGc}
        l(G) + l(\Gc) \leq \ord G-1.
    \end{equation}
    In particular, 
    \begin{equation} \label{min_lG}
        \min\{ l(G), l(\Gc)\} \leq \dfrac{\ord G-1}{2}.
    \end{equation}
\end{prop}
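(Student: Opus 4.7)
The proposal is as follows. Set $l_1 := l(G)$ and $l_2 := l(G^c)$, and choose subgraphs $H_1 \subseteq G$ and $H_2 \subseteq G^c$ that realize these degeneracy values, i.e., with $\delta(H_1) = l_1$ and $\delta(H_2) = l_2$. Such subgraphs exist by the very definition of $l$. The entire argument will then split on whether or not the vertex sets $V(H_1)$ and $V(H_2)$ meet.

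First, suppose $V(H_1) \cap V(H_2) \neq \emptyset$, and pick any vertex $v$ in the intersection. The key observation is that the neighbors of $v$ inside $H_1$ are all $G$-neighbors of $v$ (since $H_1 \subseteq G$), while the neighbors of $v$ inside $H_2$ are all $G^c$-neighbors of $v$. These two sets of neighbors are therefore disjoint, and neither contains $v$ itself. Since $v$ has at least $l_1$ neighbors in $H_1$ and at least $l_2$ neighbors in $H_2$, we get
\[
l_1 + l_2 \;\le\; \deg_G(v) + \deg_{G^c}(v) \;=\; \ord G - 1,
\]
which is exactly \eqref{lG+lGc}.

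Now suppose $V(H_1) \cap V(H_2) = \emptyset$. Then $|V(H_1)| + |V(H_2)| \le \ord G$, and because any graph $H$ satisfies the trivial bound $\delta(H) \le |V(H)| - 1$, we obtain
\[
l_1 + l_2 \;\le\; \bigl(|V(H_1)| - 1\bigr) + \bigl(|V(H_2)| - 1\bigr) \;\le\; \ord G - 2,
\]
which is even stronger than \eqref{lG+lGc}. This handles both cases and establishes \eqref{lG+lGc}. The second inequality \eqref{min_lG} follows at once, since $\min\{l(G), l(G^c)\}$ is at most the average $(l(G) + l(G^c))/2 \leq (\ord G-1)/2$.

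I do not expect a significant obstacle: the proof is essentially a clean degree count once one notices that a common vertex of the two witness subgraphs has its $G$-degree and $G^c$-degree summing to $\ord G - 1$. The only mild subtlety is the empty-intersection case, which is dispatched by the elementary bound $\delta(H) \le |V(H)| - 1$; this case is actually strictly easier, since it yields the sharper estimate $\ord G - 2$.
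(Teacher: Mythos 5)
Your proof is correct and uses essentially the same argument as the paper: a degree count at a vertex common to the two witness subgraphs, using $\deg_G(v)+\deg_{G^c}(v)=\ord G-1$. The only difference is organizational — the paper argues by contradiction (which forces the witness subgraphs to intersect), while you split directly into the intersecting and disjoint cases, handling the latter with the trivial bound $\delta(H)\le |V(H)|-1$; both are valid.
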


\begin{proof}
    Assume $l(G)+l(\Gc) \geq \ord G$ for some graph $G$. We can then find the subgraphs $H$ of $G$ and $K$ of $\Gc$, such that $\delta(H) = l(G)$ and $\delta(K) = l(\Gc)$. Thus, $|H|+|K| > l(G) + l(\Gc) \geq \ord G$, which implies $V(H) \cap V(K) \neq \varnothing$. Let $v \in V(H) \cap V(K)$. We then have
    \begin{align*}
        \ord G-1 =  \deg_G(v)+\deg_{\Gc}(v)
         \geq  \deg_H(v)+\deg_K(v) 
         \geq  \delta(H)+\delta(K)
         =  l(G)+l(\Gc) 
         \geq  \ord G
    \end{align*}
    namely, a contradiction. Equation (\ref{min_lG}) easily follows from (\ref{lG+lGc}).
\end{proof}

We also observe that (\ref{lG+lGc}) is no longer valid if $l(G)$ is replaced by $\dmm(G)$. In fact, for the graph $G$ in Figure~\ref{counterDmm}, we have $\ord G=8$, while $\dmm(G)=\dmm(\Gc)=4$.

\begin{figure}[H] 
    \begin{center}
        \setlength{\unitlength}{1mm}
        \begin{picture}(40,20)(0,4)
            \put(5,0){\circle*{1.5}}
            \put(5,20){\circle*{1.5}}
            \put(10,5){\circle*{1.5}}
            \put(10,15){\circle*{1.5}}
            \put(20,5){\circle*{1.5}}
            \put(20,15){\circle*{1.5}}
            \put(30,5){\circle*{1.5}}
            \put(30,15){\circle*{1.5}}
            \qbezier(5,0)(5,0)(5,20)
            \qbezier(5,0,)(5,0)(10,5)
            \qbezier(5,20)(5,20)(10,15)
            \qbezier(5,0)(12,1)(20,5)
            \qbezier(5,20)(12,19)(20,15)
            \qbezier(5,0)(15,-2)(30,5)
            \qbezier(5,20)(15,22)(30,15)
            \qbezier(10,5)(10,5)(10,15)
            \qbezier(20,5)(20,5)(20,15)
            \qbezier(30,5)(30,5)(30,15)
            \qbezier(10,5)(20,10)(30,5)
            \qbezier(10,15)(10,15)(30,15)
        \end{picture}
    \end{center}
    \caption{Counterexample to $\dmm(G)+\dmm(\Gc) \leq \ord G-1$.}
    \label{counterDmm}
\end{figure}

Another notable result is the relation between degeneracy and the size of a graph.

\begin{prop}[Lick and White \cite{LickWhite}] \label{PropLickWhite}
    Let $G$ be a graph of order $n$ and degeneracy $k$. Then $G$ has at most $kn - \frac{k(k+1)}{2}$ edges.
\end{prop}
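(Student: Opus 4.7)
The plan is to use the standard degeneracy ordering. Since $G$ is $k$-degenerate, by definition every subgraph $H$ of $G$ satisfies $\delta(H)\le k$. First I would construct, greedily, an ordering $v_1,v_2,\dots,v_n$ of $V(G)$ as follows: having chosen $v_1,\dots,v_{i-1}$, let $G_i$ be the subgraph of $G$ induced by $V(G)\setminus\{v_1,\dots,v_{i-1}\}$, and pick $v_i$ to be a vertex of minimum degree in $G_i$. By the degeneracy hypothesis applied to the subgraph $G_i$, we have $\deg_{G_i}(v_i)\le k$ for every $i$.

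Next I would charge each edge of $G$ to its endpoint of smaller index in the ordering. For each $i$, the number of edges charged to $v_i$ equals the number of neighbors of $v_i$ in $G_i$, which is at most $\deg_{G_i}(v_i)\le k$. This already gives the crude bound $m(G)\le kn$, but to sharpen it to $kn-\binom{k+1}{2}$ I would also use the trivial bound that $v_i$ has at most $|V(G_i)|-1=n-i$ neighbors in $G_i$. Hence the number of edges charged to $v_i$ is at most $\min\{k,\,n-i\}$.

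Summing over $i$ then gives
\[
m(G)\ \le\ \sum_{i=1}^{n}\min\{k,n-i\}\ =\ \sum_{i=1}^{n-k}k\ +\ \sum_{i=n-k+1}^{n}(n-i)\ =\ k(n-k)+\frac{k(k-1)}{2}\ =\ kn-\frac{k(k+1)}{2},
\]
which is the desired inequality. The only delicate step is the splitting of the sum in the last display: one must be careful to notice that once fewer than $k$ vertices remain, the bound $n-i$ becomes tighter than $k$, and it is precisely this refinement that upgrades the naive $kn$ bound to $kn-\binom{k+1}{2}$. No step presents a real obstacle; the proof is essentially a one-pass counting argument based on the greedy minimum-degree elimination order.
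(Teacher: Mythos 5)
Your proof is correct. The paper states this result as a citation to Lick and White and gives no proof of its own, so there is nothing to compare against; your argument --- a greedy minimum-degree elimination ordering, charging each edge to its earlier endpoint, and bounding the charge at $v_i$ by $\min\{k,\,n-i\}$ before summing --- is the standard proof of this bound, and the final computation $k(n-k)+\tfrac{k(k-1)}{2}=kn-\tfrac{k(k+1)}{2}$ checks out.
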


Given $n > 0$ and nonnegative integers $h, k < n$, we say that $(h,k)$ is a {\em degeneracy pair of order $n$} if there exists a graph $G$ of order $n$ such that $h = l(G)$ and $k = l(G^c)$. Proposition~\ref{lGprop} implies 

\begin{equation} \label{degpair1}
    h + k \leq n - 1,
\end{equation}
while Proposition~\ref{PropLickWhite} yields

\begin{equation} \label{degpair2}
    hn - \frac{h(h+1)}{2} + kn - \frac{k(k+1)}{2} \geq \frac{n(n-1)}{2}.
\end{equation}

More in general, given nonnegative integers $h, k < n$ we will say that $(h,k)$ is a {\em covering pair of order $n$} if (\ref{degpair1}) and (\ref{degpair2}) are satisfied. Of course, any degeneracy pair is a covering pair. If $(h,k)$ with $h + k \leq n - 2$ is a covering pair of order $n$, then so are $(h+1,k)$ and $(h,k+1)$. On the other hand, (\ref{degpair2}) may fail when $h$ or $k$ are reduced. We then say that a covering pair $(h,k)$ is {\em left-minimal (resp.\ right-minimal)} if $(h-1,k)$ (resp.\ $(h,k-1)$) is not a covering pair. Note that a pair may be right-minimal without being left-minimal or conversely, as it is for the pair $(4,2)$ when $n = 9$.

We also say that a nonnegative integer $r<n$ is a {\em covering (resp.\ degeneracy) sum of order $n$} if there exists a covering (resp.\ degeneracy) pair $(h,k)$ such that $h+k = r$. It can be easily seen that if $(h,k)$ is a covering pair of order $n$, so is $\left(\left\lceil \frac{h+k}{2}\right\rceil, \left\lfloor \frac{h+k}{2}\right\rfloor\right)$. Therefore,

\begin{prop} \label{degsum}
    A nonnegative integer $r$ is a covering sum of order $n$ if and only if $\left(\left\lceil \frac{r}{2}\right\rceil, \left\lfloor \frac{r}{2}\right\rfloor\right)$ is a covering pair of order $n$.
\end{prop}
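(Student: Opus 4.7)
The plan splits naturally into the two directions of the biconditional. The $(\Leftarrow)$ direction will be an immediate unfolding of definitions: once $(\lceil r/2\rceil,\lfloor r/2\rfloor)$ is known to be a covering pair of order $n$, its two components sum to $r$, so $r$ is a covering sum by the definition given just before the proposition.

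The substance lies in $(\Rightarrow)$, which is precisely the claim flagged in the sentence preceding the proposition statement. Given any covering pair $(h,k)$ of order $n$ with $h+k=r$, I will verify that the balanced pair $(h',k') := (\lceil r/2\rceil,\lfloor r/2\rfloor)$ still satisfies both conditions (\ref{degpair1}) and (\ref{degpair2}). Inequality (\ref{degpair1}) is inherited for free, since $h'+k' = r = h+k \leq n-1$. So the only real work is to check (\ref{degpair2}) for $(h',k')$, assuming it holds for $(h,k)$.

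For this, I would set $f(x) := xn - \tfrac{x(x+1)}{2}$ and use $h^2+k^2 = (h+k)^2 - 2hk = r^2 - 2hk$ to rewrite
\[
f(h)+f(k) \;=\; rn - \tfrac{1}{2}(h^2+k^2) - \tfrac{r}{2} \;=\; rn + hk - \tfrac{r^2+r}{2}.
\]
With the sum $h+k = r$ held fixed, $f(h)+f(k)$ is therefore an increasing affine function of the product $hk$. Among nonnegative integers summing to $r$, the product $hk$ is maximized exactly at the balanced split $(\lceil r/2\rceil,\lfloor r/2\rfloor)$, so $f(h')+f(k') \geq f(h)+f(k)$; combined with condition (\ref{degpair2}) for $(h,k)$, this yields condition (\ref{degpair2}) for $(h',k')$, as required.

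I do not foresee a genuine obstacle: the argument reduces to the elementary inequality $hk \leq \lceil r/2\rceil \lfloor r/2 \rfloor$ together with the observation that replacing $h,k$ by their balanced counterparts only strengthens the left-hand side of (\ref{degpair2}). The only care needed is the short algebraic rearrangement above and the routine check that the parity of $r$ plays no role beyond choosing the floor and the ceiling.
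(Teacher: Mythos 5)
Your proof is correct and follows the same route the paper intends: the paper simply asserts that balancing a covering pair preserves the covering property, and your computation $f(h)+f(k)=rn+hk-\tfrac{r^2+r}{2}$ together with the fact that $hk$ is maximized at the balanced split $\bigl(\lceil r/2\rceil,\lfloor r/2\rfloor\bigr)$ is exactly the verification being left to the reader. The reverse direction is, as you say, immediate from the definition of a covering sum.
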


To determine whether a given covering sum $r$ of order $n$ can be regarded as the degeneracy sum for a suitable graph $G$ of order $n$, we introduce Algorithm \ref{lGalgorithm}. If $G$ is a graph and $S$ is a subset of vertices, then $G[S]$ denotes the subgraph of $G$ induced by the vertices in $S$.

We prove the correctness of Algorithm~\ref{lGalgorithm} in Theorem~\ref{correctness}.
In this regard, if, for a given graph $G$ of order $n$, we consider an ordered sequence of its vertices $\sigma = (v_1,v_2,\ldots,v_n)$, we say that $\sigma$ is a {\em building sequence of degree $h$} if $\max_i\{d_i\}=h$, where, for each $i$, $d_i$ is the degree of $v_i$ in $G[\{v_1,\ldots,v_i\}]$. In fact, the smallest $h$ for which a building sequence of degree $h$ exists coincides with $l(G)$.

\begin{algorithm}[H]
    \caption{Given integers $n > h \geq 0$, this algorithm determines the adjacency matrix $A$ of a graph $G$ with $\ord G=n$, $l(G)=h$, such that $(l(G),l(G^c))$ is a right-minimal covering pair of order $n$.
    }\label{lGalgorithm}
    \begin{algorithmic}[1]
        \Require $n$ positive integer
        \Require $h$ nonnegative integer smaller than $n$
        \Ensure $A$ adjacency matrix of order $n$
        \State $\mathrm{maxlGc}(n,h)$
        \State $A = A(a_{i,j}) = 0_{n \times n}$
        \For{$j$ = $1$ to $n$}
            \State $L(j) = 0$ \qquad // $L(j)$ tracks the degree of vertex $j$ in $G^c[j\colon\! n,j\colon\! n]$
        \EndFor
        \For{$i$ = $2$ to $n$}
            \If{$i \leq h+1$}
                \State $S = \{ 1,2,\ldots,i-1\}$\label{S1} \quad // vertices to be connected to $i$ in  $G[1\colon\! i,1\colon\! i]$
            \Else{}
                \State $S = \varnothing$
                \For{$s$ = $1$ to $h$}\label{S2}
                    \State $j_s$ = index of the $s^{\mathrm{th}}$ largest entry of vector $L$; in case of a tie, choose the smallest index
                    \State $S = S \cup \{j_s\}$\label{S2b}
                \EndFor
            \EndIf
            \For{$j$ = $1$ to $i-1$}
                \If{$j \in S$}\label{if_jS}
                    \State$a_{i,j} = 1$; $a_{j,i} = 1$\label{S3}\quad  // vertex $j$ is adjacent to $i$ in $G[1\colon\! i,1\colon\!i]$
                \Else                   
                    \State $L(j) = L(j) + 1$\quad \label{updateL} // vertex $i$ is adjacent to $j$ in $G^c[j\colon\!n,j\colon\!n]$
                \EndIf
            \EndFor
        \EndFor
    \end{algorithmic}
\end{algorithm}

\begin{thm} \label{correctness}
    For each pair of integers $0 \leq h < n$, Algorithm~\ref{lGalgorithm} determines the adjacency matrix $A$ of a graph $G$ of order $n$ and degeneracy $h$ such that $(l(G),l(G^c))$ is a right-minimal covering pair of order $n$. In addition, building sequences of order $l(G)$ and $l(G^c)$ are $\sigma = (1,2,\ldots,n-1,n)$ and $\tau = (n,n-1,\ldots,2,1)$, respectively.
\end{thm}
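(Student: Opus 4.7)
The plan is to verify in turn the four assertions of the theorem --- $l(G)=h$, that $\sigma$ and $\tau$ are optimal building sequences for $G$ and $G^c$, and that $(l(G),l(G^c))$ is a right-minimal covering pair --- by combining direct inspection of Algorithm~\ref{lGalgorithm} with one inductive lemma. Direct inspection: for $i\le h+1$ line~\ref{S1} puts $S=\{1,\dots,i-1\}$, and for $i>h+1$ lines~\ref{S2}--\ref{S2b} return $|S|=h$. Thus along $\sigma=(1,\dots,n)$ the degree of $i$ in $G[\{1,\dots,i\}]$ equals $\min(i-1,h)$, so $\sigma$ is a building sequence of degree $h$, giving $l(G)\le h$; the first $h+1$ steps build $K_{h+1}$, giving $l(G)\ge\delta(K_{h+1})=h$, so $l(G)=h$ and $\sigma$ is optimal. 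Moreover, line~\ref{updateL} increments $L(j)$ exactly when a later vertex $i$ is processed with $j\notin S_i$, so at termination $L(j)=\deg_{G^c[\{j,\dots,n\}]}(j)$ and $\tau=(n,\dots,1)$ is a building sequence for $G^c$ of degree $\max_j L(j)$, giving $\max_j L(j)\ge l(G^c)$.

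The main technical step is the local claim that for every $s$, $\delta(H_s)=L(s)$ where $H_s:=G^c[\{s,\dots,n\}]$. Since $\deg_{H_s}(s)=L(s)$, only $\deg_{H_s}(i)\ge L(s)$ for $i>s$ needs proof. A direct decomposition gives $\deg_{H_s}(i)=L(i)+e_{s,i}$, where $e_{s,i}:=(i-s)-|S_i\cap[s,i-1]|$ for $i>h+1$ and $e_{s,i}=0$ for $i\le h+1$ (since the first $h+1$ vertices are pairwise adjacent in $G$), so the target inequality is $L(s)\le L(i)+e_{s,i}$ for all $1\le s<i\le n$. I would prove this by induction on the step index $i'$, maintaining the invariant that at the end of step $i'$, $L^{(i')}(s)-L^{(i')}(k)\le e_{s,k}$ for all $1\le s<k\le i'$. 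Persistence across steps $i'+1>k$ is direct: if $s$ and $k$ share membership status in $S_{i'+1}$ the difference is unchanged, if $s\in S_{i'+1}$ but $k\notin S_{i'+1}$ the difference drops by $1$, and the opposite case is handled because the tie-breaking rule (smallest index into $S$) forces $L^{(i')}(s)<L^{(i')}(k)$ strictly whenever $s\notin S_{i'+1}$ and $k\in S_{i'+1}$ with $s<k$. The delicate base case is, at step $i'=k$, to verify $L^{(k)}(s)\le e_{s,k}$, which reduces to the inequality $L^{(k-1)}(s)\le\#\{j\in[s+1,k-1]:j\notin S_k\}$; this is where I expect the main obstacle, as it requires extracting from the inductive hypothesis and the greedy choice of $S_k$ (the $h$ indices with largest $L^{(k-1)}$-values) a sharp count of indices in $[s+1,k-1]$ left outside $S_k$.

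Once the local claim is established, applying it at any $j^*\in\arg\max_j L(j)$ gives $l(G^c)\ge\delta(H_{j^*})=\max_j L(j)$, hence $l(G^c)=\max_j L(j)$ and $\tau$ is optimal. For the covering-pair property, Proposition~\ref{lGprop} yields (\ref{degpair1}), while Proposition~\ref{PropLickWhite} applied to $G$ and to $G^c$, combined with $m(G)+m(G^c)=\binom{n}{2}$, yields (\ref{degpair2}). For right-minimality, observe that the algorithm achieves the Lick--White equality $m(G)=\sum_i d_i=hn-\binom{h+1}{2}$, so $m(G^c)=\binom{n}{2}-hn+\binom{h+1}{2}$. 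Let $k_0$ denote the smallest integer $k$ with $kn-\binom{k+1}{2}\ge m(G^c)$; Proposition~\ref{PropLickWhite} applied to $G^c$ yields $l(G^c)\ge k_0$, and the greedy rule of Algorithm~\ref{lGalgorithm}, being optimal for the step-by-step distribution of increments, gives the matching bound $\max_j L(j)\le k_0$, so $l(G^c)=k_0$. By the definition of $k_0$, $(h,k_0-1)$ fails (\ref{degpair2}), establishing right-minimality.
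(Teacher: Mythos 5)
Your treatment of the easy parts is fine: $l(G)=h$ (your $K_{h+1}$ argument is a clean alternative to the paper's edge count via Proposition~\ref{PropLickWhite}), the identity $L(j)=\deg_{G^c[\{j,\ldots,n\}]}(j)$, and hence that $\tau$ is a building sequence of degree $t_n:=\max_j L(j)$, so $l(G^c)\le t_n$. But the two claims that carry all the remaining weight are both left unproven. First, your ``local claim'' $\delta(G^c[\{s,\ldots,n\}])=L(s)$: the reduction to $L(s)\le L(i)+e_{s,i}$ is correct and the persistence step of your invariant (including the tie-breaking observation) checks out, but you explicitly concede that the base case $L^{(k-1)}(s)\le\#\{j\in[s+1,k-1]:j\notin S_k\}$ is where you expect the main obstacle, and you do not resolve it. It really is the crux: the crude count $\#\{j\in[s+1,k-1]:j\notin S_k\}\ge(k-1-s)-h$ does not suffice in general (compare it with the worst case $L^{(k-1)}(s)=k-h-2$ for small $s$), so one must control how many of the $h$ selected indices lie below $s$, which requires knowing the actual distribution of the values of $L$. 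Second, and independently, your right-minimality step asserts $\max_j L(j)\le k_0$ because the greedy rule is ``optimal for the step-by-step distribution of increments.'' Knowing only $\sum_j L_n(j)=m(G^c)$ and $\max_j L_n(j)=t_n$ gives no lower bound on $m(G^c)$ in terms of $t_n$ (a priori one entry could be large and the rest zero), so this is precisely the point where a structural description of $L_n$ is indispensable and cannot be waved through.

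Both gaps are filled in the paper by a single lemma you do not have (its Fact~1): an induction showing that after every step the multiset of values of $L_i$ consists of one entry equal to each of $0,1,\ldots,t_i-2$ together with $p_i$ entries equal to $t_i-1$ and $q_i$ entries equal to $t_i$, where $p_i,q_i\ge1$ and $p_i+q_i\ge h$. From this the paper derives $\sigma_n\ge(t_n-1)n-\frac{(t_n-1)t_n}{2}+1$, which shows that $(h,t_n-1)$ violates (\ref{degpair2}); combined with $l(G^c)\le t_n$ and the fact that $(l(G),l(G^c))$ is automatically a covering pair, this forces $l(G^c)=t_n$ and right-minimality in one stroke. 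Note the paper never needs your local claim about $\delta(H_s)$: it identifies $l(G^c)$ indirectly through the covering-pair inequalities rather than by exhibiting a subgraph of $G^c$ of minimum degree $t_n$. To salvage your route you should either prove the distributional lemma (after which your $k_0$ argument goes through and the local claim becomes superfluous), or give an independent proof of the base case of your invariant; as written, the proposal establishes only $l(G)=h$ and $l(G^c)\le t_n$.
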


\begin{proof}
    Lines~\ref{S1} and \ref{S3} ensure that, for each $i=1,\ldots,h+1$, all the entries to the left of $a_{i,i}$ are $1$s, while for $i=h+2,\ldots,n$, lines~\ref{S2}-\ref{S2b} ensure that precisely $h$ entries to the left of $a_{i,i}$ are $1$s. This implies that $\sigma = (1,2,\ldots,n-1,n)$ is a building sequence of degree $h$ (and thus $l(G) \leq h$), and that the size of $G$ is
    \begin{equation} \label{sizeG}
        m(G)=\frac{h(h+1)}{2}+h(n-h-1) = hn - \frac{h(h+1)}{2},
    \end{equation}
    so that $l(G)=h$ by Proposition~\ref{PropLickWhite}. 
    
    To determine the degeneracy of $G^c$, we observe that line~\ref{updateL} increases $L(j)$ by $1$ exactly when the entry $a_{i,j}$ is not set to $1$ on line~\ref{S3}. Thus, $j < i$ implies that, for each $j=1,\ldots,n$, $L(j)$ counts the number of entries equal to $0$ below $a_{j,j}$. To better compare different iterations of the vector $L$, let $L_i$ denote the vector $L$ after the update on line~\ref{updateL}, for $i = 2,\ldots,n$. Also, let
    \begin{align*}
        t_i = \max_j L_i(j); \qquad \sigma_i = \sum_j L_i(j).
    \end{align*}

    If we define $\tau = (n,n-1,\ldots,2,1)$, the degree of $\tau$ as a building sequence for $G^c$ is $t_n$, so that 
    \begin{equation} \label{degGc}
        l(G^c) \leq t_n
    \end{equation}
    
    When $i \leq h+1$, lines~\ref{S1} and \ref{if_jS}-\ref{updateL} ensure that $L_i=0$, and thus $t_i=0$ and $\sigma_i=0$. On the other hand, when $i \geq h+2$, since $|S|=h$, line~\ref{updateL} is executed $i - 1 - h>0$ times, so that $L_i \neq 0$ and $t_i>0$. To better track the count of the components of $L_i$ that are equal to a specific value $r$, 
    we introduce counting functions $\psi_i$ as follows:
    \begin{equation*}
        \begin{array}{cccc}\psi_i: & \{0,1,\ldots,t_i\} & \rightarrow & \mathbb N\\
        & r & \mapsto & \left|\{ j \colon L_i(j)=r \}\right|
        \end{array}\, .
    \end{equation*}

    For each $i$ we also set $p_i=\psi_i(t_i-1)$ and $q_i=\psi_i(t_i)$. Since $S$ consists of the indices relative to the largest $h$ components of $L$ and such components are not updated in line~\ref{updateL}, one can easily conclude that 
    \begin{equation} \label{titi-1}
     t_i = \left\{ \begin{array}{ll}
     t_{i-1} & \mbox{when $q_{i-1} \leq h$}\\
     t_{i-1}+1 & \mbox{when $q_{i-1}>h$.}
     \end{array} \right.
    \end{equation}
    
    \vspace{1em}
    \noindent{\bf Fact 1.} For each $i \geq h+2$, if $t_i=1$, then $1 \leq q_i \leq i-1$, while if $t_i \geq 2$ the count can be summarized as follows:

    \begin{equation}
        \begin{array}{c|ccccccc} \label{psi}
            r=L_i(j)&0&1&2&\ldots&t_i-2&t_i-1&t_i\\ \hline
            \psi_i(r)&n-i+1&1&1&\ldots&1&p_i&q_i
        \end{array}
    \end{equation}
    with $p_i \geq 1$, $q_i \geq 1$, and $p_i+q_i \geq h$. Note that in (\ref{psi}), the option $\psi_i(1)=1$ should be included only when $t_i \geq 3$, and $\psi_i(2)=\cdots= \psi_i(t_i-2)=1$ only when $t_i \geq 4$. 
    
    To prove Fact~1, we observe that, when $t_i=1$,  line~\ref{updateL} possibly updates only the first $i-1$ components of $L$, so that $q_i \leq i-1$. If $t_i \geq 2$, we proceed by induction on $i$.

    \vspace{1em}
    \noindent Basis Step: $t_i = 2$ and $t_{i-1}=1$. By (\ref{titi-1}) we have $q_{i-1}>h$. Thus, all but $h$ of the components $L_{i-1}(j)=1$ and all the components $L_{i-1}(j)=0$ with $j<i$ are increased by $1$ on line~\ref{updateL}. This yields $q_i = q_{i-1} - h$, so that $1 \leq q_i \leq i-2 - h$, and $p_i = i-1-q_i$, so that $p_i \geq 1+h \geq 1$ and $p_i+q_i = i-1 \geq h$. Hence this count can be summarized as

    \begin{equation*}
        \begin{array}{c|ccc}
            r=L_i(j)&0&1&2\\ \hline
            \psi_i(r)&n-i+1&p_i&q_i
        \end{array}\, .
    \end{equation*}

    \vspace{1em}
    \noindent Inductive Step Case 1: $t_i = t_{i-1} \geq 2$. We can then assume that the table for $\psi_{i-1}$ is 
    \begin{equation}
        \begin{array}{c|ccccccc} \label{psi1}
            r=L_{i-1}(j)&0&1&2&\ldots&t_{i-1}-2&t_{i-1}-1&t_{i-1}\\ \hline
            \psi_{i-1}(r)&n-i+2&1&1&\ldots&1&p_{i-1}&q_{i-1}
        \end{array}
    \end{equation}
    By (\ref{titi-1}) we also have $q_{i-1} \leq h$. The inductive hypothesis also ensures $p_{i-1}+q_{i-1} \geq h$, so that none of the indices for which $L_{i-1}(j) \leq t_{i-1}-2$ is placed in $S$. Therefore, line~\ref{updateL} will increase by $1$ all the components of $L$ such that $1 \leq L_{i-1}(j) \leq t_{i-1}-2$. After the update, it can be easily determined that $q_i = p_{i-1}+2q_{i-1}-h \geq 1$, $p_i=h-q_{i-1}+1 \geq 1$, and $p_i+q_i = p_{i-1}+q_{i-1}+1 \geq h$. Also, when $t_i \geq 3$, $\psi_i(1)=1$ comes from $L_i(i-1)$ which was line~\ref{updateL} updated from $0$ to $1$, while, when $t_i \geq 4$, for $2 \leq r \leq t_i-2$, $\psi_i(r) = \psi_{i-1}(r-1)=1$. Finally, $\psi_i(0)=\psi_{i-1}-1=n-i$, since now $L_i(i-1) \neq 0$.
    
    \vspace{1em}
    \noindent Inductive Step Case 2: $t_i = t_{i-1} + 1 \geq 3$. Again, we can assume the table for $\psi_{i-1}$ to be as in (\ref{psi1}), while (\ref{titi-1}) now yields $q_{i-1}>h$. With reasoning similar to that as presented in the Basis Step, we conclude that $q_i = q_{i-1}-h \geq 1$, and $p_i = p_{i-1}+h \geq 1$, from which $p_i+q_i \geq h$. As in the Inductive Step Case I, when $t_i \geq 3$, then $\psi_i(1)=1$, and when $t_i \geq 4$ and $2 \leq r \leq t_i - 2$ then $\psi_i(r)=\psi_{i-1}(r-1)$, as well as $\psi_i(0)=n-i$, so the proof of Fact~1 is complete.

    Rewriting (\ref{psi}) for $i=n$ we have

    \begin{equation}
        \begin{array}{c|ccccccc} \label{psin}
            r=L_n(j)&0&1&2&\ldots&t_n-2&t_n-1&t_n\\ \hline
            \psi_n(r)&1&1&1&\ldots&1&p_n&q_n
        \end{array}
    \end{equation}

    From (\ref{psin}) we can compute
    \begin{equation*}
        \sigma_n = 0 + 1 + 2 + \cdots + (t_n - 2) + p_n(t_n -1) + q_n(t_n).
    \end{equation*}
    This yields lower and upper bounds on $\sigma_n$ in terms of $t_n$. In fact, observing that necessarily $t_n - 1 + p_n + q_n = n$, the maximum value of $\sigma_n$ is achieved when $p_n = 1$ and $q_n = n - t_n$, so that

    \begin{equation} \label{SigmaUpper}
        \sigma_n \leq \frac{(t_n-1)t_n}{2} + (n - t_n)t_n\, , 
    \end{equation}
    while the minimum value is attained when $q_n = 1$ and $p_n = n - t_n$, so that
    \begin{equation} \label{SigmaLower}
        \sigma_n \geq \frac{(t_n-2)(t_n-1)}{2} + (n - t_n)(t_n - 1) + t_n\, .
    \end{equation}
    
    Simple algebraic computations combine (\ref{SigmaUpper}) and (\ref{SigmaLower}) giving
    \begin{equation} \label{SigmaLowerUpper}
        (t_n - 1)n - \frac{(t_n-1)t_n}{2} + 1 \leq \sigma_n \leq t_n n - \frac{t_n(t_n+1)}{2} \,.
    \end{equation}

    We also observe that $\sigma_n$ coincides with the size of $G^c$, so that, recalling (\ref{sizeG}) we have
    \begin{equation} \label{hSigma}
        hn - \frac{h(h+1)}{2} + \sigma_n = \frac{n(n-1)}{2}
    \end{equation}

    Finally, note that the inequality to the left in (\ref{SigmaLowerUpper}) together with (\ref{hSigma}) implies that $(h,t_n-1)$ does not satisfy (\ref{degpair2}) and thus is not a covering pair. On the other hand, $(l(G),l(G^c))$ is necessarily a covering pair. Recalling that $l(G)=h$, by (\ref{degGc}) we conclude $l(G^c)=t_n$, and therefore $(l(G),l(G^c)=(h,t_n)$ is a right-minimal covering pair.
\end{proof}

\begin{cor} \label{deg=cov}
    Let $0 \leq r < n$. Then $r$ is a degeneracy sum of order $n$ if and only if $r$ is a covering sum of order $n$.
\end{cor}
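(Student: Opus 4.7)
The forward direction is immediate: given a graph $G$ of order $n$ with $l(G)+l(G^c)=r$, Proposition~\ref{lGprop} supplies inequality (\ref{degpair1}) for the pair $(l(G),l(G^c))$, while Proposition~\ref{PropLickWhite} applied to both $G$ and $G^c$, combined with $m(G)+m(G^c)=n(n-1)/2$, supplies inequality (\ref{degpair2}); hence $(l(G),l(G^c))$ is a covering pair and $r$ is a covering sum.

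For the converse, let $r$ be a covering sum of order $n$. By Proposition~\ref{degsum}, the pair $(a,b):=(\lceil r/2\rceil,\lfloor r/2\rfloor)$ is a covering pair. My plan is to slide this pair along the anti-diagonal $\{(a+i,b-i)\}_{i\ge 0}$ of constant sum $r$ until reaching a right-minimal covering pair, and then invoke Algorithm~\ref{lGalgorithm} to realise that pair as $(l(G),l(G^c))$ for some graph $G$ of order $n$. Concretely, let $I$ be the largest integer with $0\le I\le b$ for which $(a+I,b-I)$ is still a covering pair, and set $(h_0,k_0):=(a+I,b-I)$.

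I claim $(h_0,k_0)$ is right-minimal. If $k_0=0$ this is vacuous, so suppose $k_0\ge 1$; note $h_0\le n-2$ because $h_0+k_0=r\le n-1$. Maximality of $I$ forces $(h_0+1,k_0-1)$ to fail the covering conditions, and since (\ref{degpair1}) holds automatically for $(h_0,k_0-1)$, the failure must lie in (\ref{degpair2}). The left-hand side of (\ref{degpair2}) is non-decreasing in $h$ on $\{0,\dots,n-1\}$, since passing from $h$ to $h+1$ changes it by $n-h-1\ge 0$; consequently the LHS at $(h_0,k_0-1)$ is no larger than at $(h_0+1,k_0-1)$, so $(h_0,k_0-1)$ also violates (\ref{degpair2}). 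Hence $(h_0,k_0)$ is right-minimal.

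Finally, run Algorithm~\ref{lGalgorithm} on inputs $n$ and $h_0$. By Theorem~\ref{correctness} its output is a graph $G$ of order $n$ with $l(G)=h_0$ and $(l(G),l(G^c))$ a right-minimal covering pair. For fixed $h$, the set $\{k:(h,k)\text{ is a covering pair}\}$ is an interval in $k$ (the $k$-dependent part of (\ref{degpair2}) is monotone in $k$ for $k\le n-1$, and (\ref{degpair1}) cuts off an interval from above), so the right-minimal covering pair with first coordinate $h_0$ is unique and must equal $(h_0,k_0)$. Therefore $l(G^c)=k_0$, giving $l(G)+l(G^c)=h_0+k_0=a+b=r$, so $r$ is a degeneracy sum. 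The delicate point in the proof is the sliding-plus-monotonicity step, which turns a covering pair on the diagonal of constant sum $r$ into a right-minimal one with the same sum — exactly what Algorithm~\ref{lGalgorithm} can subsequently realise.
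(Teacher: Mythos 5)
Your proof is correct and follows essentially the same route as the paper: both select the covering pair of sum $r$ with minimal second coordinate (your anti-diagonal sliding from the balanced pair of Proposition~\ref{degsum} lands on the same pair), realise it via Algorithm~\ref{lGalgorithm} and Theorem~\ref{correctness}, and use the monotonicity of (\ref{degpair2}) to match the algorithm's right-minimal output with that pair. Your version merely makes the monotonicity and uniqueness of the right-minimal pair for fixed first coordinate more explicit than the paper does.
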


\begin{proof}
    We already observed that a degeneracy sum of order $n$ is a covering sum of the same order. Conversely, if $r$ is a covering sum of order $n$, among all covering pairs $(h,k)$ with $h + k = r$ select one with minimum $k$. By applying Algorithm~1 we obtain a graph with $|G|=n$, $l(G)=h$, and $l(G^c)=t_n$, where $(h,t_n)$ is a right-minimal covering pair. The minimality of $t_n$ yields $k \geq t_n$. On the other hand, if $k > t_n$, then $(h+k-t_n,t_n)$ would also be a covering pair with sum $r$, against the minimality of $k$.
    Thus $k = t_n$, so that $l(G) + l(G^c) = h + k = r$.
\end{proof}

\begin{thm} \label{lGbounds}
    Let $0 \leq r < n$. Then $r$ is a covering sum of order $n$ if and only if
    \begin{equation} \label{brace_r}
        r \geq \left\{
        \begin{array}{ll}
        2n - 1 - \sqrt{2n^2-2n+1} & \mbox{if $r$ is even}\\
        2n - 1 - \sqrt{2n^2-2n} & \mbox{if $r$ is odd}
        \end{array}
        \right.
    \end{equation}
\end{thm}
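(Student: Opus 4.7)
The plan is to reduce the statement to a quadratic inequality via Proposition~\ref{degsum} and then solve the quadratic separately in the two parity cases. By Proposition~\ref{degsum}, $r$ is a covering sum of order $n$ if and only if $(h,k) = (\lceil r/2\rceil, \lfloor r/2\rfloor)$ satisfies both defining inequalities of a covering pair, namely
\[
h + k \leq n - 1 \qquad \text{and} \qquad hn - \frac{h(h+1)}{2} + kn - \frac{k(k+1)}{2} \geq \frac{n(n-1)}{2}.
\]
Since $h+k = r$, the first inequality becomes the familiar constraint $r \leq n-1$; the real content is to determine exactly when the second inequality holds for the balanced choice of $(h,k)$. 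I will show that this second inequality, viewed as a condition on $r$, is equivalent to the stated lower bound, and that the first inequality is then automatic.

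First I treat the even case $r = 2s$, so $h = k = s$. The second inequality becomes
\[
2sn - s(s+1) \geq \frac{n(n-1)}{2},
\]
which after clearing denominators rearranges to $2s^2 - (4n-2)s + (n^2 - n) \leq 0$. The discriminant equals $8n^2 - 8n + 4$, so the quadratic formula gives roots $s = \frac{2n-1 \pm \sqrt{2n^2 - 2n + 1}}{2}$. Since the left root is the only relevant lower bound (the coefficient of $s^2$ is positive and $s$ is small), this translates into $r = 2s \geq 2n - 1 - \sqrt{2n^2 - 2n + 1}$, which matches the even case in (\ref{brace_r}). For the odd case $r = 2s+1$, so $h = s+1$ and $k = s$, using the identity $(s+1)(s+2) + s(s+1) = 2(s+1)^2$ the inequality becomes $(2s+1)n - (s+1)^2 \geq \frac{n(n-1)}{2}$, which rearranges to $2s^2 - (4n-4)s + (n^2 - 3n + 2) \leq 0$ with discriminant $8n^2 - 8n$. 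The corresponding lower root gives $r = 2s+1 \geq 2n - 1 - \sqrt{2n^2 - 2n}$, matching the odd case.

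It remains to verify that the constraint $r \leq n-1$ is automatically implied by the lower bound $r \geq 2n - 1 - \sqrt{2n^2 - 2n + 1}$: this reduces to $\sqrt{2n^2 - 2n + 1} \geq n$, i.e.\ $(n-1)^2 \geq 0$, which is trivially true, and similarly in the odd case since $\sqrt{2n^2-2n} \geq n$ whenever $n \geq 2$. Finally, I should address the edge cases (small $n$, or $r=0$) to confirm that the derivations above are vacuously consistent with the statement.

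The main obstacle is essentially bookkeeping: one must be careful that taking $(h,k) = (\lceil r/2\rceil, \lfloor r/2\rfloor)$ really is the optimal balancing of $(h,k)$, which is where Proposition~\ref{degsum} is used (the function $h \mapsto hn - h(h+1)/2$ is concave, so balancing $h$ and $k$ maximizes the left-hand side of the second covering condition, making this choice the ``hardest'' to satisfy). Beyond that the argument is routine algebra; the main risk is a sign error when solving the two quadratics, which I have already cross-checked against the stated bounds.
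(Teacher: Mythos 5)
Your proof is correct and takes essentially the same approach as the paper: reduce via Proposition~\ref{degsum} to the balanced pair $(\lceil r/2\rceil,\lfloor r/2\rfloor)$, substitute into (\ref{degpair2}), and solve the resulting quadratic separately for $r$ even and $r$ odd, discarding the upper root. One small wording quibble: the constraint $r \leq n-1$ is not ``implied by the lower bound'' but is already part of the hypothesis $0 \leq r < n$; what your computation $\sqrt{2n^2-2n+1} \geq n$ actually verifies is that the lower bound in (\ref{brace_r}) does not exceed $n-1$, so the two conditions are compatible.
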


\begin{proof}
    We first consider the case where $r$ is even, that is, $r = 2h$ for some integer $h$. In this case, by making use of Proposition \ref{degsum}, equation (\ref{degpair2}) becomes
    \begin{align*}
        2hn - h(h+1) \geq \frac{n(n-1)}{2}.
    \end{align*}
    which can be rearranged into the quadratic inequality
    \begin{align*}
        2h^2 - 2(2n - 1)h + n^2 - n \leq 0.
    \end{align*}
    which yields 
    \begin{equation} \label{range_even}
    \frac{2n - 1 - \sqrt{2n^2 - 2n + 1}}{2} \leq h \leq \frac{2n - 1 + \sqrt{2n^2 - 2n + 1}}{2}.
    \end{equation}
    The inequality on the right is superseded by $h < \frac{n}{2}$, so that, when $r$ is even, (\ref{range_even}) is equivalent to (\ref{brace_r}).

    When $r$ is odd we just set $r = 2h - 1$. A reasoning similar to the even case yields
    \begin{equation*}
    \frac{2n - \sqrt{2n^2 - 2n}}{2} \leq h \leq \frac{2n + \sqrt{2n^2 - 2n}}{2}.
    \end{equation*}
    from which $r \geq 2n - 1 - \sqrt{2n^2 - 2n}$ follows.
\end{proof}

Theorem~\ref{lGbounds} allows us to identify Nordhaus-Gaddum lower and upper bounds for the parameter $l(G)$ in terms of the number of vertices in $G$.

\begin{thm}\label{thm_Nord-Gadd_for_degen}
    For any graph $G$ of order $n$
    \begin{equation} \label{NG}
        2n - 1 - \sqrt{2n^2 - 2n + 1} \leq l(G) + l(G^c) \leq n -1.
    \end{equation}
    In addition, for any integer $r$ in such a range there exists a graph $G$ of order $n$ with $l(G)+l(G^c)=r$.
\end{thm}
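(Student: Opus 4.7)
The plan is to obtain this theorem essentially as a synthesis of the preceding material, with almost all of the technical work already encapsulated in Proposition~\ref{lGprop}, Proposition~\ref{PropLickWhite}, Theorem~\ref{lGbounds}, and Corollary~\ref{deg=cov}. The upper bound $l(G) + l(G^c) \le n-1$ is literally inequality~(\ref{lG+lGc}) from Proposition~\ref{lGprop}, so nothing remains to be done there. For the lower bound, I would argue that $(l(G),l(G^c))$ is automatically a covering pair of order $n$: inequality~(\ref{degpair1}) is again Proposition~\ref{lGprop}, while inequality~(\ref{degpair2}) follows by applying Proposition~\ref{PropLickWhite} to both $G$ and $G^c$ and adding, using $m(G) + m(G^c) = \binom{n}{2}$. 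Consequently $r := l(G) + l(G^c)$ is a covering sum of order $n$, and Theorem~\ref{lGbounds} forces $r$ to lie above the claimed bound. (Since $r$ may be even or odd, the bound $2n-1-\sqrt{2n^2-2n+1}$ stated in the theorem is the weaker of the two cases in~(\ref{brace_r}) and therefore covers both parities.)

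For the realizability assertion, I would go in the opposite direction through the same chain of results. Given any integer $r$ with $2n - 1 - \sqrt{2n^2 - 2n + 1} \le r \le n-1$, Theorem~\ref{lGbounds} guarantees that $r$ is a covering sum of order $n$, and Corollary~\ref{deg=cov} (which was precisely designed for this purpose, by invoking Algorithm~\ref{lGalgorithm} on the balanced pair $(\lceil r/2\rceil,\lfloor r/2\rfloor)$) then promotes $r$ to a degeneracy sum, producing a graph $G$ of order $n$ with $l(G)+l(G^c)=r$.

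There is no substantive obstacle: the theorem is a packaging result, and the only point requiring minor care is checking that the lower bound in~(\ref{NG}) as written is actually an inequality of the form in~(\ref{brace_r}) for \emph{every} parity of $r$. Since $\sqrt{2n^2-2n+1} \ge \sqrt{2n^2-2n}$, the even-case bound is the looser of the two, so writing it uniformly for all $r$ loses nothing on the lower-bound side of~(\ref{NG}), while for the realizability part one simply invokes the sharper (parity-specific) form from~(\ref{brace_r}) to know that every integer $r$ in the stated range is indeed a covering sum.
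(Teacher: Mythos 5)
Your overall architecture matches the paper's: the upper bound is Proposition~\ref{lGprop}, the lower bound follows because $l(G)+l(G^c)$ is a degeneracy (hence covering) sum and is therefore subject to Theorem~\ref{lGbounds}, and realizability is delegated to Corollary~\ref{deg=cov}. However, there is a genuine gap in your realizability step. Theorem~\ref{lGbounds} characterizes covering sums by a \emph{parity-dependent} threshold: an odd $r$ is a covering sum only if $r \geq 2n-1-\sqrt{2n^2-2n}$, which is a strictly stronger requirement than the bound $r \geq 2n-1-\sqrt{2n^2-2n+1}$ appearing in (\ref{NG}). So for odd $r$ in the stated range you cannot ``simply invoke the sharper (parity-specific) form'': you must prove that no odd integer lies in the gap $\left[\,2n-1-\sqrt{2n^2-2n+1},\ 2n-1-\sqrt{2n^2-2n}\,\right)$. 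Your write-up asserts this implicitly without an argument.

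The paper closes exactly this gap with a short number-theoretic observation, which is in fact the only nontrivial content of its proof of the second assertion: the ceilings $\bigl\lceil 2n-1-\sqrt{2n^2-2n+1}\bigr\rceil$ and $\bigl\lceil 2n-1-\sqrt{2n^2-2n}\bigr\rceil$ can differ only when $2n^2-2n+1$ is a perfect square (the two radicands differ by $1$), and in that case $\sqrt{2n^2-2n+1}$ is an odd integer, so the unique integer in the gap, namely $2n-1-\sqrt{2n^2-2n+1}$, is even. Hence every odd $r$ in the range of (\ref{NG}) automatically clears the odd-case threshold in (\ref{brace_r}). The scenario is not vacuous --- for $n=4$ one has $2n^2-2n+1=25$, the ceilings differ, and the borderline value $r=2$ is even --- so the parity argument is genuinely needed and must be supplied to complete your proof.
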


\begin{proof}
    The degeneracy sum $l(G) +l(G^c)$ is a covering sum by Corollary~\ref{deg=cov}, and so it satisfies (\ref{brace_r}). Together with (\ref{lG+lGc}) it implies (\ref{NG}).
    For the second part, for a given integer $r$ within the range determined by (\ref{NG}), if $r$ is even then (\ref{brace_r}) holds, and so $r$ is a covering sum, and thus a degeneracy sum. For $r$ odd the same argument holds except when
    \begin{equation} \label{odd_scenario}
        r = \left\lceil 2n - 1 - \sqrt{2n^2 - 2n + 1}\right\rceil < \left\lceil 2n - 1 - \sqrt{2n^2 - 2n} \right\rceil
    \end{equation}
    which occurs when $\left\lfloor \sqrt{2n^2 - 2n + 1}\right\rfloor > \left\lfloor \sqrt{2n^2 - 2n} \right\rfloor$. Note that, since the arguments in these two radicands differ by $1$, the inequality occurs only when $2n^2 - 2n + 1$ is a perfect square. Thus, $\sqrt{2n^2-2n+1}$ is an odd integer, so that $r$ would be even. In other words, (\ref{odd_scenario}) cannot occur when $r$ is odd, and the proof is complete.
\end{proof}

\begin{ex}
    For $n = 14$, equation (\ref{NG}) yields $l(G)+l(G^c) \geq 8$. By applying Algorithm~\ref{lGalgorithm}, we obtain a graph $G$ (see Figure~\ref{fig:enter-label}) such that $l(G) = l(G^c) = 4$.
\end{ex}
\begin{figure}[H]
    \centering
\begin{tikzpicture}[every node/.style={circle, draw=black, ultra thick, fill=white, minimum size=14pt, inner sep=0.5pt}, scale=1]

% Define node positions
\node (1) at (0.5,0) {1};
\node (2) at (1,1) {2};
\node (3) at (1,-1) {3};
\node (4) at (2,2) {4};
\node (5) at (2,-2) {5};
\node (6) at (3,2.5) {6};
\node (7) at (3,-2.5) {7};
\node (8) at (4,2.5) {8};
\node (9) at (4,-2.5) {9};
\node (10) at (5,2) {10};
\node (11) at (5,-2) {11};
\node (12) at (6,1) {12};
\node (13) at (6,-1) {13};
\node (14) at (6.5,0) {14};

% edges for vertex 1.
\draw[thick] (1) -- (2);
\draw[thick] (1) -- (3);
\draw[thick] (1) to[bend right=10] (4);
\draw[thick] (1) to[bend left=10] (5);
\draw[thick] (1) to[bend right=10] (6);
\draw[thick] (1) to[bend left=10] (7);
\draw[thick] (1) to[bend right=5] (8);
\draw[thick] (1) -- (10);
\draw[thick] (1) -- (12);
\draw[thick] (1) -- (14);

% edges for vertex 2.
\draw[thick] (2) -- (3);
\draw[thick] (2) -- (4);
\draw[thick] (2) -- (5);
\draw[thick] (2) to[bend right=10] (6);
\draw[thick] (2) -- (7);
\draw[thick] (2) -- (9);
\draw[thick] (2) -- (10);
\draw[thick] (2) -- (12);

% edges for vertex 3.
\draw[thick] (3) -- (4);
\draw[thick] (3) -- (5);
\draw[thick] (3) -- (6);
\draw[thick] (3) to[bend left=5] (7);
\draw[thick] (3) -- (9);
\draw[thick] (3) -- (11);
\draw[thick] (3) -- (13);

% edges for vertex 4.
\draw[thick] (4) -- (5);
\draw[thick] (4) -- (6);
\draw[thick] (4) to[bend right=10] (8);
\draw[thick] (4) -- (9);
\draw[thick] (4) -- (11);
\draw[thick] (4) -- (13);

% edges for vertex 5.
\draw[thick] (5) -- (7);
\draw[thick] (5) -- (8);
\draw[thick] (5) to[bend left=4] (9);
\draw[thick] (5) -- (11);
\draw[thick] (5) -- (13);

% edges for vertex 6.
\draw[thick] (6) -- (8);
\draw[thick] (6) to[bend right=10] (10);
\draw[thick] (6) -- (11);
\draw[thick] (6) -- (13);

% edges for vertex 7.
\draw[thick] (7) -- (10);
\draw[thick] (7) -- (12);
\draw[thick] (7) -- (14);

% edges for vertex 8.
\draw[thick] (8) to[bend right=10] (12);
\draw[thick] (8) to[bend right=10] (14);

% edges for vertex 9.
\draw[thick] (9) to[bend left=5] (14);
\end{tikzpicture}
\hspace{0.8cm}
\begin{tikzpicture}[every node/.style={circle, draw=black, ultra thick, fill=white, minimum size=14pt, inner sep=0.5pt}, scale=1]
% Define node positions
\node (1) at (0.5,0) {1};
\node (2) at (1,1) {2};
\node (3) at (1,-1) {3};
\node (4) at (2,2) {4};
\node (5) at (2,-2) {5};
\node (6) at (3,2.5) {6};
\node (7) at (3,-2.5) {7};
\node (8) at (4,2.5) {8};
\node (9) at (4,-2.5) {9};
\node (10) at (5,2) {10};
\node (11) at (5,-2) {11};
\node (12) at (6,1) {12};
\node (13) at (6,-1) {13};
\node (14) at (6.5,0) {14};

% edges for vertex 1.
\draw[thick] (1) -- (9);
\draw[thick] (1) -- (11);
\draw[thick] (1) -- (13);

% edges for vertex 2.
\draw[thick] (2) -- (8);
\draw[thick] (2) -- (11);
\draw[thick] (2) -- (13);
\draw[thick] (2) -- (14);

% edges for vertex 3.
\draw[thick] (3) -- (8);
\draw[thick] (3) -- (10);
\draw[thick] (3) to[bend left=8] (12);
\draw[thick] (3) -- (14);

% edges for vertex 4.
\draw[thick] (4) -- (7);
\draw[thick] (4) -- (10);
\draw[thick] (4) -- (12);
\draw[thick] (4) -- (14);

% edges for vertex 5.
\draw[thick] (5) -- (6);
\draw[thick] (5) to[bend left=10] (10);
\draw[thick] (5) -- (12);
\draw[thick] (5) -- (14);

% edges for vertex 6.
\draw[thick] (6) -- (7);
\draw[thick] (6) to[bend left=5] (9);
\draw[thick] (6) -- (12);
\draw[thick] (6) -- (14);

% edges for vertex 7.
\draw[thick] (7) -- (8);
\draw[thick] (7) -- (9);
\draw[thick] (7) to[bend left=8] (11);
\draw[thick] (7) -- (13);

% edges for vertex 8.
\draw[thick] (8) -- (9);
\draw[thick] (8) -- (10);
\draw[thick] (8) -- (11);
\draw[thick] (8) -- (13);

% edges for vertex 9.
\draw[thick] (9) -- (10);
\draw[thick] (9) -- (11);
\draw[thick] (9) -- (12);
\draw[thick] (9) to[bend left=8] (13);

% edges for vertex 10.
\draw[thick] (10) -- (11);
\draw[thick] (10) -- (12);
\draw[thick] (10) -- (13);
\draw[thick] (10) to[bend right=10] (14);

% edges for vertex 11.
\draw[thick] (11) -- (12);
\draw[thick] (11) -- (13);
\draw[thick] (11) to[bend left=5] (14);

% edges for vertex 12.
\draw[thick] (12) -- (13);
\draw[thick] (12) -- (14);

% edges for vertex 13.
\draw[thick] (13) -- (14);
\end{tikzpicture}
\caption{Complementary graphs of order $14$ with $l(G)=l(G^c)=4$}
    \label{fig:enter-label}
\end{figure}
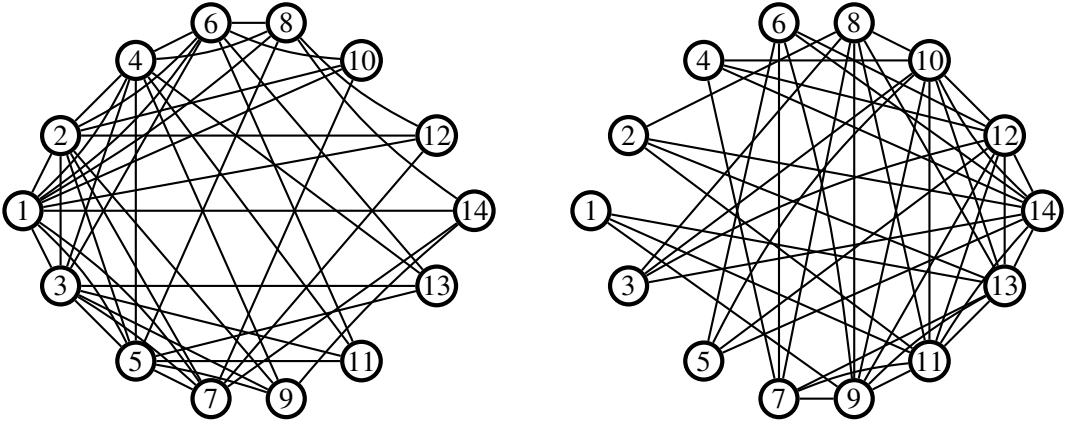

\subsection{Weak graph complement conjecture improvements: variations}\label{subsec_deg_WGCC}

In this subsection, we return to the Weak Graph Complement Conjecture and observe particular updates to the proposed bounds, upon assuming the validity of the $\delta$-conjecture and referring to the results on graph degeneracy (see Theorem~\ref{thm_Nord-Gadd_for_degen}).

We begin by noting that the role of degeneracy in relation to maximum nullity of graphs with strong Arnold property which has been recently studied in \cite{LonDeg}.

\begin{thm} [Mitchel \cite{LonDeg}]
    For any graph $G$
    \begin{equation} \label{LonEq}
        \nu(G) \geq \ord G - 2l(\Gc) - 1.
    \end{equation}
\end{thm}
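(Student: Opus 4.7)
The plan is to produce a positive semidefinite matrix $A \in \SG$ with the strong Arnold property and $\mathrm{rank}(A) \leq 2l(G^c)+1$; setting $k := l(G^c)$ and $n := \ord G$, this gives $\nu(G) \geq \nul(A) \geq n - 2k - 1$. The strategy is to build a \emph{faithful orthogonal representation} of $G$ in $\mathbb{R}^{2k+1}$, i.e., unit vectors $u_1, \dots, u_n$ with $u_i^T u_j = 0$ if and only if $\{v_i, v_j\}$ is a non-edge of $G$ (equivalently, an edge of $G^c$); the Gram matrix $A = (u_i^T u_j)$ is then positive semidefinite, lies in $\SG$, and has rank at most $2k+1$.

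Since $G^c$ is $k$-degenerate, I order the vertices $v_1, \dots, v_n$ so that each $v_i$ has at most $k$ neighbors of $G^c$ in $\{v_1, \dots, v_{i-1}\}$. The vectors will be assigned inductively in this order: at step $i$, let
\[
W_i = \mathrm{span}\bigl\{u_j : j < i,\ \{v_j, v_i\} \in E(G^c)\bigr\},
\]
so $\dim W_i \leq k$, and pick $u_i$ as a generic unit vector in $W_i^\perp$, whose dimension is at least $k+1 \geq 2$ whenever $k \geq 1$. A generic choice then ensures $u_i^T u_j \neq 0$ for every earlier $j$ with $\{v_j, v_i\} \in E(G)$, provided the projection of each such $u_j$ onto $W_i^\perp$ is nonzero; this non-degeneracy is enforced inductively by the genericity of the prior $u_j$'s. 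The boundary case $k = 0$ forces $G = K_n$, and a single unit direction in $\mathbb{R}^1$ assigned to every vertex suffices.

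Having obtained the Gram matrix $A$, one gets $\Mp(G) \geq n - 2k - 1$ directly. To upgrade this from $\Mp$ to $\nu$, I will invoke the Lov\'asz--Saks--Schrijver construction~\cite{LSS1989} refined by van der Holst~\cite{vdH08}: the set of orthogonal representations whose Gram matrix fails SAP is a proper algebraic subvariety of the parameter space of admissible $(u_1, \dots, u_n)$, so a sufficiently generic selection yields a Gram matrix that additionally satisfies SAP. Combining, $\nu(G) \geq n - 2k - 1$.

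The main obstacle is the SAP step: the two-dimensional (or larger) freedom at each inductive stage when $k \geq 1$ is exactly what makes the transversality arguments of \cite{LSS1989, vdH08} go through, and the passage from $\Mp$ to $\nu$ parallels how $\nu(G) \geq \kappa(G)$ is established via generic orthogonal representations. Once this is carried out, the bound $\nu(G) \geq \ord G - 2l(G^c) - 1$ follows.
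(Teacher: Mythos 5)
First, a point of reference: the paper does not prove this statement at all --- it is quoted verbatim from Mitchell \cite{LonDeg} --- so there is no in-paper argument to compare against, and your proposal has to stand on its own.

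The first half of your plan is sound and is surely the core of Mitchell's argument: using a $k$-degeneracy ordering of $G^c$ (with $k=l(G^c)$) to build a faithful orthogonal representation of $G$ in $\mathbb{R}^{2k+1}$. The dimension $2k+1$ is exactly what makes the induction close: since $\dim W_i^\perp\geq k+1$, a generic choice of $u_i$ keeps the invariant that every $k+1$ of the vectors are linearly independent, which is what guarantees that each $u_j$ with $j\sim_G i$ has nonzero projection onto $W_i^\perp$ (because $W_i$ is spanned by at most $k$ of the vectors and $u_j$ is not among them). You should state this invariant explicitly rather than appeal to ``genericity of the prior $u_j$'s,'' but that is a writing issue, not a mathematical one; it yields $\M_+(G)\geq \ord G-2l(G^c)-1$ cleanly.

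The genuine gap is the passage from $\M_+$ to $\nu$. Your justification --- ``the set of orthogonal representations whose Gram matrix fails SAP is a proper algebraic subvariety of the parameter space'' --- is precisely the assertion that needs proof, and it cannot be assumed: if no rank-$(2k+1)$ positive semidefinite matrix in $\SG$ had SAP, that ``bad set'' would be the entire parameter space, and it is still Zariski closed. Nothing in \cite{LSS1989} or \cite{vdH08} covers your situation, because van der Holst's SAP argument applies to faithful orthogonal representations in \emph{general position} (every $d$ of the $n$ vectors linearly independent in $\mathbb{R}^d$), and your construction does not produce general position in $\mathbb{R}^{2k+1}$: already for $k=1$, if $a$ and $b$ are two $G$-non-neighbors of the unique earlier $G^c$-neighbor $c$ of $i$, then $u_a,u_b$ span $W_i^\perp=u_c^\perp$ and $u_i$ is forced into $\mathrm{span}\{u_a,u_b\}$, so some triples are dependent. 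You therefore need a separate transversality argument tailored to this construction (this is the actual content of Mitchell's note); as written, your proof establishes the bound only for $\M_+(G)$, not for $\nu(G)$.
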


The next two results resolve Problem 2 and  establish bounds $b_{\nu}$ for the strongest version of the Weak Graph Complement Conjectures \ref{gccnu}, assuming that the $\delta$-conjecture~\ref{delta} holds for all graphs. Note that the first result makes a simple use of Proposition~\ref{lGprop} 

\begin{prop} \label{obs_half}
    If we assume the validity of the $\delta$-conjecture, we conclude that, for any graph $G$,
    \begin{equation} \label{weekHalf}
        \nu(G) + \nu(\Gc) \geq \dfrac{\ord G-1}{2}.
    \end{equation}
    or, equivalently,
    \begin{equation} \label{week3Half}
        \mrn(G)+\mrn(\Gc) \leq \dfrac{3}{2}\ord G+\dfrac{1}{2}.
    \end{equation}
    
\end{prop}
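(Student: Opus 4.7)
The plan is to combine two complementary lower bounds on $\nu$: Mitchel's inequality $\nu(G) \ge \ord G - 2l(\Gc) - 1$ from Theorem~\ref{LonEq}, and the bound $\nu(\Gc) \ge l(\Gc)$ furnished by the assumed $\delta$-conjecture (via the chain $\nu \ge \lceil\delta\rceil \ge l$ in Conjecture~\ref{delta}). Proposition~\ref{lGprop}, in its min form, will then control the free parameter $l(\Gc)$.

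Concretely, I would first observe that the statement is symmetric in $G$ and $\Gc$, and that Proposition~\ref{lGprop} guarantees $\min\{l(G),l(\Gc)\} \le (\ord G - 1)/2$; so, after possibly interchanging $G$ and $\Gc$, I may assume $l(\Gc) \le (\ord G - 1)/2$. Next, Mitchel's theorem yields $\nu(G) \ge \ord G - 2l(\Gc) - 1$, while the $\delta$-conjecture gives $\nu(\Gc) \ge l(\Gc)$. Adding these, one copy of $l(\Gc)$ cancels, leaving $\nu(G) + \nu(\Gc) \ge \ord G - l(\Gc) - 1 \ge \ord G - (\ord G - 1)/2 - 1 = (\ord G - 1)/2$. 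The equivalent $\mrn$ statement follows immediately by subtracting from $2\ord G$.

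The only step requiring real insight is spotting that Mitchel's bound on $\nu(G)$ and the $\delta$-conjecture bound on $\nu(\Gc)$ both involve the \emph{same} quantity $l(\Gc)$, so that their sum becomes a clean affine function of $l(\Gc)$ to which Proposition~\ref{lGprop} applies directly. Once this alignment is in place, the remainder is just a WLOG swap and routine arithmetic, which is presumably why the authors describe the argument as a ``simple use'' of Proposition~\ref{lGprop}.
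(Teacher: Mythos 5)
Your proposal is correct and follows essentially the same route as the paper: both combine Mitchel's bound with $\nu(\Gc)\geq l(\Gc)$ to get $\nu(G)+\nu(\Gc)\geq \ord G - l(\Gc)-1$, and then invoke Proposition~\ref{lGprop}; your WLOG swap is just the paper's step of taking the maximum of the two symmetric inequalities to reach $\ord G - \min\{l(G),l(\Gc)\}-1$.
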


\begin{proof}
    By applying (\ref{deltaEq}) to $\Gc$ we have $\nu(\Gc) \geq l(\Gc)$, which can be added to (\ref{LonEq}) to obtain $\nu(G)+\nu(\Gc) \geq \ord G - l(\Gc) - 1.$ In a similar way we can also obtain $\nu(G)+\nu(\Gc) \geq \ord G - l(G) -1,$ which, by  recalling Proposition~\ref{lGprop}, yields
    \begin{align*}
        \nu(G)+\nu(\Gc) &\geq \max\{\ord G - l(G) -1,\ord G - l(\Gc) -1\}
         = \ord G - \min\{l(G),l(\Gc)\}-1
         \geq  \dfrac{\ord G-1}{2}.
    \end{align*}
    Equation (\ref{week3Half}) easily follows from (\ref{weekHalf}).
\end{proof}

Before we move to the next result, we note that the positive semidefinite version of Conjecture \ref{gccnu} was shown to hold for the special case of bipartite graphs in \cite{mrplus-Siv} with a bound of 3/2. 

However, applying Theorem~\ref{thm_Nord-Gadd_for_degen}, we derive an even tighter bound on the strongest version of the Weak Graph Complement Conjecture \ref{gccnu}, again assuming the $\delta$-conjecture~\ref{delta} holds.

\begin{prop}
    If we assume the validity of the $\delta$-conjecture, we can improve further the claim in Proposition \ref{obs_half} and conclude that, for any graph $G$,
    \begin{equation*}
        \nu(G) + \nu(\Gc) \geq (2-\sqrt{2})\ord G - 1.
    \end{equation*}
    or, equivalently,
    \begin{equation*}
       \mrn(G)+\mrn(\Gc) \leq \sqrt{2}\ord G + 1.
    \end{equation*}
\end{prop}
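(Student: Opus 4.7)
The plan is to combine the $\delta$-conjecture with the sharp Nordhaus--Gaddum bound for degeneracy (Theorem~\ref{thm_Nord-Gadd_for_degen}) rather than going through Mitchel's bound as in Proposition~\ref{obs_half}. The key observation is that Theorem~\ref{thm_Nord-Gadd_for_degen} already furnishes the constant $2-\sqrt{2}$, up to the $-1$ correction, so one does not need any interplay between the two halves of the $\nu$-sum.

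First, the $\delta$-conjecture~\ref{delta} guarantees that $\nu(G) \geq l(G)$ and $\nu(\Gc) \geq l(\Gc)$, since the conjectured chain reads $\delta \leq l \leq \dmm \leq \nu$. Adding these two inequalities yields
\begin{equation*}
    \nu(G) + \nu(\Gc) \;\geq\; l(G) + l(\Gc).
\end{equation*}
Second, Theorem~\ref{thm_Nord-Gadd_for_degen} provides the unconditional lower bound
\begin{equation*}
    l(G) + l(\Gc) \;\geq\; 2n - 1 - \sqrt{2n^2 - 2n + 1}.
\end{equation*}
The final step is the purely algebraic estimate $\sqrt{2n^2 - 2n + 1} \leq \sqrt{2}\, n$, which holds for every $n \geq 1$ because $2n^2 - 2n + 1 \leq 2n^2$ whenever $n \geq 1/2$. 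Substituting gives
\begin{equation*}
    \nu(G) + \nu(\Gc) \;\geq\; 2n - 1 - \sqrt{2}\, n \;=\; (2 - \sqrt{2}) \ord G - 1,
\end{equation*}
which is the desired inequality. The equivalent formulation for $\mrn$ then follows from $\mrn(G) + \mrn(\Gc) = 2\ord G - (\nu(G) + \nu(\Gc))$, yielding $\mrn(G) + \mrn(\Gc) \leq \sqrt{2}\,\ord G + 1$.

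There is essentially no hard step here once one notices the right building block; the only subtlety is the choice to bypass Mitchel's bound (\ref{LonEq}) entirely. Indeed, combining Mitchel's inequality with the $\delta$-conjecture as in Proposition~\ref{obs_half} caps the achievable constant near $1/2$ (via $\min\{l(G),l(\Gc)\} \leq (n-1)/2$), while Theorem~\ref{thm_Nord-Gadd_for_degen} extracts the additional structural content from Proposition~\ref{PropLickWhite} (bounding the number of edges in terms of degeneracy), which is precisely what improves the constant from $1/2$ to $2 - \sqrt{2}$.
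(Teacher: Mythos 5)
Your proof is correct and follows essentially the same route as the paper's: apply the $\delta$-conjecture in the form $\nu(G)\geq l(G)$ and $\nu(G^c)\geq l(G^c)$, invoke the Nordhaus--Gaddum lower bound of Theorem~\ref{thm_Nord-Gadd_for_degen}, and finish with the elementary estimate $(2-\sqrt{2})n-1\leq 2n-1-\sqrt{2n^2-2n+1}$. The paper's own proof is just a one-line version of exactly this argument, so nothing further is needed.
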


\begin{proof}
    It suffices to apply Theorem~\ref{thm_Nord-Gadd_for_degen} and observe that 
    $
        (2-\sqrt{2})n -1 \leq 2n - 1 - \sqrt{2n^2-2n+1}.
    $
\end{proof}

\section{Bounds in terms of other well-known graph parameters}\label{sec_other_lower_bounds}

In this section, we consider bounds, some verified and some suspected, on the graph parameter $\nu(G)$. We denote the chromatic number of a graph $G$ by $\chi(G)$. It is well known that every graph $G$ contains a subgraph whose minimum degree is at least $\chi(G)-1$, that is, $l(G)\geq \chi(G)-1$. The Hadwiger number, $\eta(G)$, of a graph $G$ is the largest integer $t$ such that the complete graph $K_t$ is a minor of $G$. Consequently, it is clear that $\lceil \kappa \rceil (G)\geq \eta(G)-1$, and hence by Theorem \ref{thm_LSS_vertex_connectivity} $\nu(G)\geq \eta(G)-1$. One of the most famous conjectures in graph theory is `Hadwiger's conjecture' which asserts that $\eta(G)\geq \chi(G)$. Therefore, if either $\delta$-Conjecture \ref{delta} or Hadwiger's Conjecture is true, then $\nu(G)\geq \chi(G)-1$. However, to the best of these authors' knowledge, the validity of this inequality in general remains an open problem. We thus state it here as a conjecture:
\begin{conj}\label{conj_nu_chromatic}
For any graph $G$, we have $\nu(G)\geq \chi(G)-1$.    
\end{conj}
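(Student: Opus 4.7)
The plan is to approach Conjecture~\ref{conj_nu_chromatic} independently of both Hadwiger's conjecture (which would give $\nu(G) \geq \eta(G) - 1 \geq \chi(G) - 1$ at once) and the $\delta$-conjecture (which yields $\nu(G) \geq l(G) \geq \chi(G) - 1$ just as quickly), by sharpening the chain $\nu(G) \geq \lceil \kappa \rceil(G)$ from Theorem~\ref{thm_LSS_vertex_connectivity} with structural input coming from chromatic criticality.

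First, I would pass to a vertex-minimal subgraph $H \subseteq G$ with $\chi(H) = \chi(G)$, i.e.\ a $\chi(G)$-critical graph. Since $\nu$ is minor monotone and any subgraph is a minor, it suffices to prove $\nu(H) \geq \chi(H) - 1$. Criticality supplies a lot of rigidity: $\delta(H) \geq \chi(H) - 1$; classical results of Dirac show $H$ is $(\chi(H)-1)$-edge-connected and has no clique cutset; and every minimum vertex separator of $H$ induces a non-complete subgraph. The goal then becomes extracting from $H$ a minor $H'$ with $\kappa(H') \geq \chi(H) - 1$, after which Theorem~\ref{thm_LSS_vertex_connectivity} closes the argument.

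Next, I would deploy the Mader-style toolkit from Section~\ref{sec_Mader_Consequences}. A direct application of Theorem~\ref{thm_Mader1972} to $H$ already produces a subgraph of connectivity on the order of $\chi(H)/4$, which, combined with Theorem~\ref{thm_LSS_vertex_connectivity}, unconditionally establishes $\nu(G) \geq (\chi(G)-1)/4$; this is a bound worth recording even if the full conjecture resists. To close the remaining constant-factor gap, the plan is to iterate Mader's contraction step within $H$, exploiting the fact that criticality forbids small separators of a controlled form, and, when $H$ additionally has large girth, to invoke the sharper bounds of Theorem~\ref{thm_Kuhn_Osthus_2003}.

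The main obstacle is precisely this constant-factor gap: Mader-style methods deliver only $\Theta(\chi(G))$, and pushing all the way to $\chi(G) - 1$ appears to require genuinely new structural input, arguably of the same order of difficulty as Hadwiger's conjecture itself modulo the bridge $\nu \geq \eta - 1$. A realistic first milestone is to verify the conjecture in regimes where Hadwiger's conjecture is known (say $\chi(G) \leq 6$) and for structured classes such as perfect graphs, line graphs, and graphs of bounded clique-width. A more speculative alternative is to bypass connectivity entirely and construct a witnessing matrix in $\SGp$ directly from an orthonormal representation indexed by an optimal coloring, in the spirit of Holst's argument underlying Theorem~\ref{thm_LSS_vertex_connectivity}.
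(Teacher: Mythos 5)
There is a fundamental mismatch here: the statement you are trying to prove is stated in the paper as an open conjecture, not a theorem, and the paper offers no proof of it. Your submission is likewise not a proof --- it is a research plan that, by your own admission in the final paragraph, cannot close the ``constant-factor gap.'' So the honest verdict is that the conjecture remains unproved on both sides. What the paper does record is exactly the kind of partial result you sketch: combining Nguyen's theorem (every graph with $\chi(G)\geq \frac{49}{16}k$ contains a $(k+1)$-connected subgraph) with Theorem~\ref{thm_LSS_vertex_connectivity} gives $\nu(G) > \frac{16}{49}\chi(G)$. Your Mader-based route is correct as far as it goes --- since $l(G)\geq \chi(G)-1$, Theorem~\ref{thm_nu_is_one_fourth_of_delta} yields $\nu(G) > \frac{\chi(G)-1}{4}$ --- but $\frac{1}{4}$ is a worse constant than $\frac{16}{49}$, so even your unconditional partial bound is dominated by what the paper already obtains without any appeal to criticality.

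The deeper gap is that your entire strategy funnels through $\nu(G)\geq \lceil\kappa\rceil(G)$, and proving $\lceil\kappa\rceil(H)\geq \chi(H)-1$ for $\chi$-critical $H$ is not a tractable reduction: it would imply a highly-connected-subgraph-from-chromatic-number statement that is itself open (the best known constant is Nguyen's $\frac{16}{49}$), and the paper's own remark after Theorem~\ref{thm_partial_proof_of_delta} shows that $\lceil\kappa\rceil$ can fall strictly below $\delta$ (the Mader graph $M_{12}$), so criticality plus $\delta(H)\geq\chi(H)-1$ does not by itself force the connectivity you need. Your closing suggestion --- constructing a witness matrix in $\SGp$ with the Strong Arnold property directly from a coloring, bypassing connectivity --- is the only part of the proposal that could in principle reach the full conjecture, but it is left entirely unspecified. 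If you want a concrete, checkable contribution in the spirit of this section, write up the $\frac{16}{49}$ corollary or verify the conjecture for the restricted classes you mention; as it stands, nothing in the proposal constitutes a proof of the statement.
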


Using the following recent result by Nguyen \cite{Nguyen2024} and Theorem \ref{thm_LSS_vertex_connectivity}, one can observe that, $\nu(G)$ is near to one-third of the value predicted by Conjecture \ref{conj_nu_chromatic}.

\begin{thm}[Nguyen \cite{Nguyen2024}]
    For every integer $k\geq 1$, every graph $G$ with $\chi(G) \geq \frac{49}{16}\cdot k$ contains a $(k+1)$-connected subgraph with more than $\chi(G)-k$ vertices and chromatic number at least $\chi(G)-2k+2$.
\end{thm}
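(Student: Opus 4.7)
My plan is to reduce $G$ to the desired $(k+1)$-connected subgraph by a greedy peeling along small vertex separators, while tracking both the chromatic number and the vertex count at each step.

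The central tool is the following cut lemma: if $H$ admits a separator $S$ with $|S| \leq k$ and components $A_1,\ldots,A_m$ of $H - S$, then some piece $H[A_i \cup S]$ satisfies $\chi(H[A_i \cup S]) \geq \chi(H) - k$. The proof is the standard clique-completion trick: augment $S$ into a clique in each piece to obtain graphs $H_i'$, observe that $\chi(H) \leq \max_i \chi(H_i')$ by reconciling proper colorings across pieces via a permutation of the colors on $S$ (using that $S$ is monochromatically distinct in each $H_i'$), and then bound $\chi(H_i') \leq \chi(H[A_i \cup S]) + k$ since adding the at most $\binom{k}{2}$ missing edges inside $S$ costs at most $k$ colors. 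Iterating this — starting from $G_0 = G$ and replacing the current graph by the best piece across a separator of size $\leq k$, halting once the graph is $(k+1)$-connected — produces a terminal $(k+1)$-connected subgraph. Equivalently, one can identify $H$ as a vertex-minimal member of the family $\mathcal{F} = \{H \subseteq G : \chi(H) \geq \chi(G) - 2k + 2\}$: its $(k+1)$-connectivity is immediate, since otherwise the cut lemma would yield a strictly smaller member of $\mathcal{F}$, contradicting minimality. This simultaneously gives the chromatic-number assertion $\chi(H) \geq \chi(G) - 2k + 2$.

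The hard part is the vertex count $|H| > \chi(G) - k$. A first-pass bound $|H| \geq \chi(H) \geq \chi(G) - 2k + 2$ falls short of the target by roughly $k$. To close this gap I would first pass to a vertex-critical subgraph of $H$ (in which $\delta \geq \chi(H) - 1$, so the average degree is at least $\chi(H) - 1$) and then invoke a sharpened density-to-connectivity extraction — stronger than Mader's classical $d(G) \geq 4(k-1)$ threshold — while keeping careful bookkeeping of how many vertices are absorbed into separators during the peeling. The signature constant $\tfrac{49}{16} = (7/4)^2$ strongly suggests that the final inequality comes from a quadratic optimization: one introduces a parameter measuring the trade-off between the chromatic loss incurred per peeling step and the density needed to force $(k+1)$-connectivity in the surviving piece, and the optimum occurs precisely at the threshold $\chi(G) \geq (7/4)^2 k$. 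I expect this calibration to be the delicate technical heart of the argument; the $(k+1)$-connectivity and chromatic bound follow from clean extremal reasoning, but the vertex bound demands a joint potential that tracks both $\chi(H)$ and $|V(H)|$ and whose evolution under the cut lemma must be optimized.
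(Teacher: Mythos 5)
First, note that the paper does not prove this statement at all: it is quoted from Nguyen \cite{Nguyen2024} and used as a black box, so there is no internal proof to measure your attempt against. Judged on its own terms, your proposal is a research plan rather than a proof, and it has two concrete gaps. The first is in the part you present as ``clean extremal reasoning.'' Your cut lemma is fine: if $S$ is a separator of $H$ with $|S|\leq k$, then some piece $H[A_i\cup S]$ has chromatic number at least $\chi(H)-k$. But a vertex-minimal member $H$ of $\mathcal{F}=\{H\subseteq G:\chi(H)\geq \chi(G)-2k+2\}$ is not forced to be $(k+1)$-connected by this lemma: if $\chi(H)$ equals the threshold $\chi(G)-2k+2$, the piece produced across a cutset of size at most $k$ is only guaranteed chromatic number $\chi(G)-3k+2$, which lies below the membership threshold, so no smaller member of $\mathcal{F}$ is produced and minimality is not contradicted. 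Closing this loop is precisely where the hypothesis $\chi(G)\geq \frac{49}{16}k$ must enter, via a weighted potential in $\chi(H)$ and $|V(H)|$ rather than plain vertex-minimality; it is needed already for the connectivity claim, not only for the vertex count.

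The second gap you acknowledge yourself: the bound $|V(H)|>\chi(G)-k$ and the provenance of the constant $\frac{49}{16}=(7/4)^2$ are deferred as ``the delicate technical heart,'' with only the guess that some quadratic optimization is responsible. Observing that the constant is a perfect square is not a derivation of the trade-off that produces it, and since the connectivity, the chromatic bound, and the vertex bound all hinge on that same calibration, essentially the entire content of Nguyen's theorem remains unproved in your sketch. If your aim is only to use the result as this paper does (to deduce $\nu(G)>\frac{16}{49}\chi(G)$ via Theorem \ref{thm_LSS_vertex_connectivity}), citing \cite{Nguyen2024} is the correct and intended move; if you aim to reprove it, you must set up the potential explicitly and carry out the optimization you are currently only predicting.
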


\begin{cor}
Let $G$ be a graph with chromatic number $\chi(G)$. Then $\nu(G)> \frac{16}{49}\chi(G)$.    \end{cor}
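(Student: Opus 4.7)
The plan follows the same template used earlier in the paper to derive Corollary \ref{cor_bound_on_nu_via_girth} and Corollary \ref{cor_bound_on_nu_via_H_free} from Mader- and Krivelevich--Sudakov-type inputs: feed the external graph-theoretic theorem — here Nguyen's — into Theorem \ref{thm_LSS_vertex_connectivity}, using the fact that any subgraph is a minor and hence contributes to $\lceil \kappa \rceil(G)$. Nguyen's theorem guarantees a $(k+1)$-connected subgraph of $G$ as soon as $\chi(G) \geq \tfrac{49}{16}k$, so I will choose $k$ as the largest integer for which this hypothesis holds, translating the resulting connectivity directly into a lower bound on $\nu$.

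Concretely, I would set $k := \lfloor \tfrac{16}{49}\chi(G) \rfloor$. Whenever $\chi(G) \geq 4$, this $k$ satisfies both $k \geq 1$ and $\chi(G) \geq \tfrac{49}{16}k$ (the latter being exactly the content of the floor definition). Nguyen's theorem then produces a $(k+1)$-connected subgraph $H$ of $G$. Since $H$ is a subgraph (hence a minor) of $G$, we obtain $\lceil \kappa \rceil(G) \geq \kappa(H) \geq k+1$, and Theorem \ref{thm_LSS_vertex_connectivity} delivers
\[
 \nu(G) \;\geq\; \lceil \kappa \rceil(G) \;\geq\; k+1.
\]
The definition of the floor gives $k > \tfrac{16}{49}\chi(G) - 1$, so $k+1 > \tfrac{16}{49}\chi(G)$, which is the strict inequality we want.

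It remains only to dispose of the small cases $\chi(G) \in \{2,3\}$, where Nguyen's theorem does not directly apply. Here $G$ has at least one edge, so $K_2$ is a minor; by minor monotonicity of $\nu$ and $\nu(K_2) = 1$ we get $\nu(G) \geq 1 > \tfrac{48}{49} \geq \tfrac{16}{49}\chi(G)$. I do not anticipate any genuine obstacle: the argument is essentially a one-line application of Nguyen's theorem with $k$ calibrated so that the target constant $16/49$ appears automatically; the only bookkeeping step requiring mild care is converting $k = \lfloor \tfrac{16}{49}\chi(G)\rfloor$ into the strict inequality $k+1 > \tfrac{16}{49}\chi(G)$ and verifying that the small-$\chi$ regime where Nguyen's hypothesis fails is covered by the trivial $K_2$-minor bound.
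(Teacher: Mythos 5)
Your argument is correct and is exactly the route the paper intends: the corollary is stated there without an explicit proof, being presented as an immediate consequence of Nguyen's theorem together with Theorem~\ref{thm_LSS_vertex_connectivity}, and your calibration $k=\lfloor\tfrac{16}{49}\chi(G)\rfloor$ followed by $\nu(G)\geq\lceil\kappa\rceil(G)\geq k+1>\tfrac{16}{49}\chi(G)$ is precisely the natural way to make that observation rigorous. The only loose end is the degenerate case $\chi(G)\leq 1$ (edgeless graphs), which your $K_2$-minor fallback does not reach, but this is immaterial and the paper does not address it either.
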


We denote the independence number of a graph $G$ by $\alpha(G)$. It is well known that $\chi(G)\geq \frac{n}{\alpha(G)}$. Therefore, if Conjecture \ref{conj_nu_chromatic} holds true, then $\nu(G)\geq \frac{n}{\alpha(G)}-1$. Again, to the best of the authors' knowledge, this is not known to hold in general, so we state it here as another interesting but weaker conjecture:
  
\begin{conj}\label{conj_nu_independence}
 For any graph $G$, we have $\nu(G)\geq \frac{n}{\alpha(G)}-1$.      
\end{conj}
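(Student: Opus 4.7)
My plan is to attack Conjecture~\ref{conj_nu_independence} along two complementary lines: a conditional reduction to other open problems already in play, and an unconditional argument that gives only a constant-factor weaker bound.

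For the conditional reduction, I would first note the elementary inequality $\chi(G)\cdot\alpha(G)\geq n$, which holds because each color class of a proper coloring is an independent set; this yields $\chi(G)\geq n/\alpha(G)$. Combining this with Conjecture~\ref{conj_nu_chromatic} immediately gives $\nu(G)\geq \chi(G)-1\geq n/\alpha(G)-1$. Alternatively, invoking Hadwiger's conjecture together with the bound $\nu(G)\geq \eta(G)-1$ already recorded in this section produces the same conclusion. Hence the target inequality follows from either of these well-studied open problems, and as such, a direct unconditional proof would presumably not be easier than resolving at least one of them.

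For an unconditional approach in the spirit of Section~\ref{sec_Mader_Consequences}, I would combine Tur\'an's theorem with Mader's Theorem~\ref{thm_Mader1972}. An independent set in $G$ corresponds to a clique in $G^c$, so $G^c$ is $K_{\alpha(G)+1}$-free, and Tur\'an's bound yields $m(G^c)\leq (1-1/\alpha(G))n^2/2$. Consequently,
\begin{equation*}
    d(G)=\frac{2m(G)}{n}\geq \frac{n}{\alpha(G)}-1,
\end{equation*}
and Theorem~\ref{thm_nu_is_one_fourth_of_delta} then gives
\begin{equation*}
    \nu(G)>\frac{\lceil d\rceil(G)}{4}\geq \frac{1}{4}\left(\frac{n}{\alpha(G)}-1\right),
\end{equation*}
which is a partial resolution within a factor of four.

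The principal obstacle is closing this factor-of-four gap. The Mader route locates a single highly connected minor from average degree alone, but the true target $n/\alpha(G)-1$ is chromatic in nature, and the coarse passage from $\lceil d\rceil$ through $\lceil \kappa\rceil$ to $\nu$ loses substantial information along the way. Tightening the bound appears to demand either significant progress on Hadwiger's conjecture, where the strongest current results only place $\eta(G)$ within a polylogarithmic factor of $\chi(G)$, or genuinely new techniques for $\nu(G)$ that exploit the strong Arnold property beyond what connectivity-type arguments can detect. In particular, any unconditional proof attempt that proceeds through $\lceil \kappa \rceil(G)$ alone seems doomed to lose the same constant factor, so a successful strategy will likely need to construct certifying positive semidefinite matrices directly from coloring or fractional-chromatic information.
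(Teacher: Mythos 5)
The statement you are addressing is recorded in the paper as a conjecture and left open there, so there is no proof of it to match; you correctly recognize this and do not claim to have one. Your conditional reduction is precisely the paper's own motivation for stating the conjecture: it derives the inequality from Conjecture~\ref{conj_nu_chromatic} via the elementary bound $\chi(G)\geq n/\alpha(G)$, and the alternative route through Hadwiger's conjecture and $\nu(G)\geq \eta(G)-1$ is likewise already on the table in that section. Where you genuinely diverge is in the unconditional partial result. Your Tur\'an-plus-Mader argument is sound: $G^c$ is $K_{\alpha(G)+1}$-free, so $m(G^c)\leq (1-1/\alpha(G))\,n^2/2$, hence $d(G)\geq n/\alpha(G)-1$, and Theorem~\ref{thm_nu_is_one_fourth_of_delta} gives $\nu(G)>\tfrac{1}{4}\left(n/\alpha(G)-1\right)$. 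But this only reaches within a factor of four of the conjectured value. The paper instead invokes the Balogh--Kostochka theorem that $\eta(G)\geq \frac{n}{(2-c)\alpha(G)}$ with $c>1/19.2$, which combined with $\nu(G)\geq \lceil\kappa\rceil(G)\geq \eta(G)-1$ yields $\nu(G)\geq \frac{n}{(2-c)\alpha(G)}-1$, i.e.\ more than half of the conjectured bound. The paper's partial result is therefore roughly twice as strong as yours; the gain comes from using a minor-extraction theorem tailored to the independence number rather than funneling everything through the average degree and Mader's theorem. Your closing diagnosis --- that any argument routed solely through $\lceil\kappa\rceil(G)$ will lose a constant factor --- is consistent with the paper's own remarks on the limitations of the connectivity-based approach.
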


A weakening of Hadwiger's conjecture is that $\eta(G)\geq \frac{n}{\alpha(G)}$. The following result by Balogh and Kostochka \cite{Balogh2011} is the best current bound to date for this weak version of Hadwiger's conjecture. Using this result and Theorem \ref{thm_LSS_vertex_connectivity}, one can observe that, $\nu(G)$ is more than one-half of the value predicted by Conjecture \ref{conj_nu_independence}. 

\begin{thm}[Balogh and Kostochka \cite{Balogh2011}]
    Let $G$ be a graph of order $n$. Then $\eta(G) \geq \frac{n}{(2-c)\alpha(G)}$ where $c = (80-\sqrt{5392})/126 > 1/19.2$.
\end{thm}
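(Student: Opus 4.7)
The plan is to establish the improved weak Hadwiger bound by refining the classical Duchet--Meyniel argument, which only yields the slightly weaker $\eta(G) \geq n/(2\alpha(G)-1)$. The broad strategy is to fix a maximum complete minor, partition the remaining vertices according to how they attach to the branch sets, and then sharpen the bookkeeping in the cases where the Duchet--Meyniel inequality is not tight. The quantitative improvement will come from isolating a structural dichotomy and taking the worst case.

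First, I would fix a maximum $K_t$-minor with connected branch sets $M_1,\ldots,M_t$ (so $t=\eta(G)$) and let $R=V(G)\setminus \bigcup_i M_i$. Since the minor is maximum, every vertex $v \in R$ fails to extend any $M_i$ into a larger branch set, which forces the non-neighbourhood structure of $v$ across the $M_i$'s to contain rich independent transversals; combined with the hypothesis $\alpha(G)\le \alpha$ this yields Duchet--Meyniel's inequality $n \le (2\alpha-1)t$. To beat it, I would introduce, for each $v \in R$, a slack parameter counting the number of branch sets to which $v$ is already adjacent, and show that if too many vertices have large slack, one can either reroute a branch set through $R$ or merge two branch sets via a $v$-path, in either case producing a $K_{t+1}$-minor and contradicting maximality.

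The heart of the proof is to show that the extremal regime --- where neither rebuilding move is available --- is so restrictive that one still gains a constant factor in the denominator. Concretely, I would argue that in the extremal case most $M_i$ are singletons, and the bipartite attachment graph between $R$ and the branch sets is nearly regular with degree close to $\alpha-1$. Performing a weighted double count over independent sets in $G[R]$ together with independent transversals into the $M_i$ then forces a quadratic algebraic constraint whose worst feasible solution gives exactly the ratio $1/(2-c)$; the specific value $c=(80-\sqrt{5392})/126$ should emerge as the positive root of this quadratic, set up to balance the ``singleton branch set'' savings against the ``path-augmentation'' savings.

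The main obstacle I expect is making the case analysis uniform enough to yield an explicit numerical constant rather than a mere $\varepsilon$-improvement. A naive pigeonhole step loses a factor polynomial in $\alpha$, which is absorbed in the rate but nullifies the constant; the fix is to encode the structural lemmas as an LP over admissible distributions of branch-set sizes and attachment degrees, whose dual gives the clean optimum. Executing that LP argument carefully --- in particular, verifying that its constraints are all genuinely forced by the structural lemmas and that no integrality loss occurs --- is what I would expect to consume the bulk of the work, and is the reason an explicit but not especially large constant like $1/19.2$ results.
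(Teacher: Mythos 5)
This statement is not proved in the paper at all: it is quoted verbatim from Balogh and Kostochka \cite{Balogh2011} as an external input, so what you are attempting is a reproof of a substantial published theorem, and your sketch does not accomplish that. The first concrete gap is in your reduction to the Duchet--Meyniel bound. The standard proof of $\eta(G)\geq n/(2\alpha(G)-1)$ does not proceed by fixing a maximum $K_t$-minor and analyzing the attachment of the leftover set $R$; it proceeds by showing every connected graph has a connected dominating set of size at most $2\alpha(G)-1$, contracting it to a single dominating vertex, and iterating. Your claim that maximality of the minor ``forces rich independent transversals'' in the non-neighbourhoods and thereby ``yields'' the Duchet--Meyniel inequality is asserted, not argued, and it is not clear it can be made to work: maximality of a $K_t$-minor only tells you that no single augmentation succeeds, which is a much weaker statement than the global counting you need.

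The second and more serious gap is that the entire quantitative content of the theorem --- the constant $c=(80-\sqrt{5392})/126$ --- is reverse-engineered rather than derived. You posit a ``weighted double count'' producing ``a quadratic algebraic constraint whose worst feasible solution gives exactly the ratio $1/(2-c)$,'' but you never write down the count, the constraint, or the quadratic; saying the constant ``should emerge'' as its root is not a proof step. Likewise the claim that in the extremal regime ``most $M_i$ are singletons'' and the attachment graph is ``nearly regular with degree close to $\alpha-1$'' has no supporting argument, and the concluding LP-duality step has no constraints to dualize. The actual Balogh--Kostochka argument gains its constant by iteratively extracting connected dominating subgraphs and showing that, outside of tightly controlled exceptional configurations, one can dominate with strictly fewer than $2\alpha-1$ vertices per step; the numerical value of $c$ comes from balancing those cases. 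None of that machinery, nor any substitute for it, appears in your outline, so as written this is a research plan rather than a proof.
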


\begin{cor}
Let $G$ be a graph with independence number $\alpha(G)$. Then $\nu(G)\geq \frac{n}{(2-c)\alpha(G)}-1$ where $c = (80-\sqrt{5392})/126 > 1/19.2$.   
\end{cor}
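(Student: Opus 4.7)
The plan is to chain together two inequalities already established earlier in the paper. First, I would recall the chain of inequalities, noted immediately before Conjecture \ref{conj_nu_chromatic}, that yields $\nu(G) \geq \eta(G) - 1$ for every graph $G$. This is obtained by observing that if $K_t$ is a minor of $G$, then $\lceil \kappa \rceil(G) \geq \kappa(K_t) = t - 1$, so in particular $\lceil \kappa \rceil(G) \geq \eta(G) - 1$, and then applying Theorem \ref{thm_LSS_vertex_connectivity}, which guarantees $\nu(G) \geq \lceil \kappa \rceil(G)$.

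Next, I would apply the Balogh--Kostochka theorem stated just above, which gives
\[
\eta(G) \geq \frac{n}{(2-c)\alpha(G)}
\]
with $c = (80-\sqrt{5392})/126$. Substituting this into the previous inequality yields
\[
\nu(G) \geq \eta(G) - 1 \geq \frac{n}{(2-c)\alpha(G)} - 1,
\]
which is exactly the claimed bound.

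There is essentially no main obstacle here since the result is a direct two-line corollary: all of the heavy lifting is done by the Balogh--Kostochka theorem (a deep result on a weak form of Hadwiger's conjecture) and by Theorem \ref{thm_LSS_vertex_connectivity} (based on Lov\'asz--Saks--Schrijver and van der Holst). The only thing to double-check is that the minor monotonicity of $\nu$ is being used correctly in passing from $K_{\eta(G)}$ being a minor of $G$ to the vertex-connectivity inequality, but this is already spelled out in the paragraph preceding Conjecture \ref{conj_nu_chromatic}, so the argument reduces to a single substitution.
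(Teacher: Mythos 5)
Your proof is correct and follows exactly the route the paper intends: chain $\nu(G)\geq \lceil \kappa \rceil(G)\geq \eta(G)-1$ (from the paragraph preceding Conjecture \ref{conj_nu_chromatic} together with Theorem \ref{thm_LSS_vertex_connectivity}) with the Balogh--Kostochka bound $\eta(G)\geq \frac{n}{(2-c)\alpha(G)}$. No gaps; this matches the paper's (implicit) two-line argument.
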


We summarize these lower bounds (solid lines) and suspected relationships (dashed or dotted lines) concerning $\nu(G)$ below in Figure \ref{fig2}.
\begin{figure}[H]
    \centering
    \begin{tikzpicture}[scale=1]
\node[] (A) at (0.35,-0.5) {{\small $\frac{n}{\alpha(G)}-1$}}; 
\node[] (B) at (1.75,0.5) {{\small $\chi(G)-1$}};
\node[] (E) at (3.35,2.5) {{\small$\lceil \delta \rceil (G) $}};
\node[] (F) at (3,4) {{\small $\nu(G)$}};
\node[] (G) at (5,2) {{\small$\lceil \kappa \rceil(G) $}};
\node[] (H) at (5,1) {{\small$\eta(G)-1$}};

\draw[->,very thick] (G) -- (F);  
\draw[->,very thick] (H) -- (G);
\draw[->,very thick] (G) -- (E);
\draw[->,very thick] (B) -- (E);
\draw[->,very thick] (A) -- (B);
\draw[dotted,very thick] (A) -- (F);
\draw[dotted,very thick] (B) -- (F);
\draw[dash dot,very thick] (A) -- (H);
\draw[dash dot,very thick] (B) -- (H);
\draw[dashed,very thick] (E) -- (F);
\end{tikzpicture}
    \caption{Known and suspected lower bounds on $\nu(G)$}
    \label{fig2}
\end{figure}
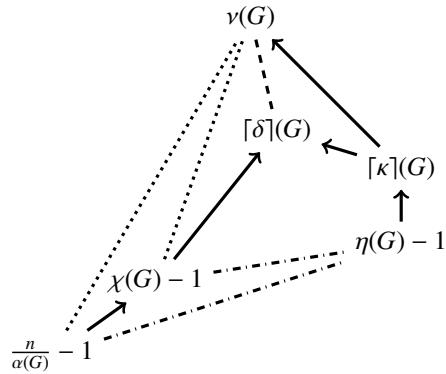

\section*{Acknowledgments}
The authors' collaboration began as part of the established ``Inverse Eigenvalue Problems for Graphs and Zero Forcing” Research Community sponsored by the American Institute of Mathematics (AIM). We thank AIM for their support, and we thank the organizers and participants for contributing to this stimulating research experience.

S.M.\ Fallat was supported in part by an NSERC Discovery Research Grant, Application No.: RGPIN-2019-03934. The work of the PIMS Postdoctoral Fellow H.\ Gupta leading to this publication was supported in part by the Pacific Institute for the Mathematical Sciences.

%%%%%%%%%%%%%%%%%%%%%%%%%%%%%%%%%%%%%%%%%

\end{document}